\newtheorem{theorem}{Theorem}[section]
\newtheorem{prop}[theorem]{Proposition}
\newtheorem{lemma}[theorem]{Lemma}
\newtheorem{cor}[theorem]{Corollary}
\newenvironment{proof}{\prepf\rm}{\endprepf}
\newcommand{\qed}{\hfill$\Box$}
\newenvironment{problem}{\preprob\rm}{\endpreprob}
\newcommand{\agl}{\mathop{\mathrm{AGL}}}
\newcommand{\agaml}{\mathop{\mathrm{A}\Gamma\mathrm{L}}}
\newcommand{\Sym}{\mathop{\mathrm{Sym}}\nolimits}
\newcommand{\sym}{S_n}
\newcommand{\alt}{A_n}
\newcommand{\trans}{T_n}
\newcommand{\End}{\mathop{\mathrm{End}}}
\begin{document}

\title{Primitive Permutation Groups and Strongly Factorizable Transformation Semigroups}
\author{Jo\~ao Ara\'ujo\footnote{Departamento de Matem\'atica and CMA, Faculdade de Ci\^encias e Tecnologia (FCT)  
Universidade Nova de Lisboa (UNL), 2829-516 Caparica, Portugal;   jj.araujo@fct.unl.pt} 
 \footnote{CEMAT-Ci\^{e}ncias, 
Faculdade de Ci\^{e}ncias, Universidade de Lisboa
1749--016, Lisboa, Portugal; jjaraujo@fc.ul.pt} , Wolfram Bentz\footnote{School of Mathematics \& Physical Sciences, University of Hull, Kingston upon Hull, HU6 7RX, UK; W.Bentz@hull.ac.uk}, and Peter J. Cameron\footnote{School of Mathematics and Statistics, University of St Andrews, North Haugh, St Andrews, Fife KY16 9SS, UK; pjc20@st-andrews.ac.uk}}
\date{}
\maketitle

\begin{abstract}
Let $\Omega$ be a finite set and $T(\Omega)$ be the full transformation monoid on $\Omega$. The rank of a transformation $t\in T(\Omega)$ is the natural number $|\Omega t|$.  Given $A\subseteq T(\Omega)$, denote by $\langle A\rangle$ the semigroup generated by $A$.  Let $k$ be a fixed natural number such that $2\le k\le |\Omega|$. In the first part of this paper we (almost) classify the permutation groups $G$ on $\Omega$ such that  for all rank $k$ transformation $t\in T(\Omega)$, every element in  $S_t:=\langle G,t\rangle$ can be written as a product $eg$, where $e^2=e\in S_t$ and $g\in G$.  In the second part we prove, among other results, that if $S\le T(\Omega)$ and $G$ is the normalizer of $S$ in the symmetric group on $\Omega$, then the semigroup $SG$ is regular if and only if $S$ is regular. (Recall that a semigroup $S$ is regular if for all $s\in S$ there exists $s'\in S$ such that $s=ss's$.) The paper ends with a list of problems.  

\vspace{2cm}
\end{abstract} 

\section{Introduction}

A semigroup $S$ with set of idempotents $E$ and group of units $G$ is said to be {\em strongly factorizable} if $S=EG$. 
Denote by $\sym$ the symmetric group  on a set $\Omega$ of cardinality $n$ and denote by $\trans$ the full transformation semigroup on the same set. It is clear that $\sym$ is the group of units of $\trans$.  Given $t\in \trans$ the {\em rank} of $t$ is the cardinality of the set $\Omega t$. 

The first goal of this paper is to prove the following sequence of theorems. 
We note that, for $1<k<n-1$, the $k$-transitive or $k$-homogeneous groups are
explicitly known.

\begin{theorem}\label{rank2}
Let $n\ge2$, and $G\le \sym$ be a group. The following are equivalent: 
\begin{enumerate}
\item for all rank $2$ transformations $t\in \trans$ the semigroup $\langle G,t\rangle$ is strongly factorizable; 
\item $G$ is primitive.  
\end{enumerate}
\end{theorem}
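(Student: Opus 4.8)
The plan is to recast strong factorizability of $S_t:=\langle G,t\rangle$ as a statement about the orbits of $G$ on ordered pairs of points, and then to read off primitivity via the connectedness of orbital digraphs. To this end, fix a rank $2$ transformation $t$. Any product in $G\cup\{t\}$ that uses $t$ has rank at most $2$ (as $|\Im t|=2$), while products of elements of $G$ alone have rank $n$; so for $n\ge3$ the rank $n$ elements of $S_t$ are exactly those of $G$ — which is therefore the group of units — the rank $1$ elements are automatically idempotent, and hence $S_t$ is strongly factorizable precisely when every rank $2$ element $s\in S_t$ equals $eg$ for an idempotent $e$ and some $g\in G$. Now $s=eg$ forces $e=sg^{-1}$, and this already lies in $S_t$ since $g^{-1}\in G$; so such a factorization exists if and only if $sg^{-1}$ is idempotent for some $g\in G$. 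Writing $\ker s=\{P,Q\}$ and letting $c,d$ be the points onto which $s$ collapses $P,Q$, the map $sg^{-1}$ is idempotent exactly when $cg^{-1}\in P$ and $dg^{-1}\in Q$. Thus $S_t$ is strongly factorizable if and only if, for every rank $2$ element $s\in S_t$, there is $m\in G$ with $cm\in P$ and $dm\in Q$, with $\{P,Q\}$, $c$, $d$ as above; here $\{P,Q\}$ is a partition of $\Omega$ into two non-empty parts and $c\ne d$, and the only structural fact about $S_t$ we shall need is that $t$ itself is one such rank $2$ element.

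For (b)$\Rightarrow$(a), suppose $G$ is primitive. By the reduction it suffices to prove: for every partition $\{X,Y\}$ of $\Omega$ into non-empty parts and all distinct $x,y\in\Omega$ there is $m\in G$ with $xm\in X$ and $ym\in Y$ (apply this with $\{X,Y\}=\ker s$ and $(x,y)=(c,d)$). Let $\Delta$ be the $G$-orbit of $(x,y)$ in $\Omega\times\Omega$, regarded as a directed graph on $\Omega$; it is non-diagonal since $x\ne y$. Its reachability relation (with $u$ related to $v$ when there is a directed $\Delta$-path from $u$ to $v$) is a $G$-invariant preorder, so its symmetric part is a $G$-invariant equivalence relation, hence by primitivity equals the identity or the all-relation. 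It is not the identity, since otherwise reachability would be a $G$-invariant partial order on a finite transitive $G$-set and hence the identity relation, leaving $\Delta$ — which it contains — edgeless, a contradiction. Therefore $\Delta$ is strongly connected, and consequently any directed $\Delta$-path from a point of $X$ to a point of $Y$ uses an edge from $X$ to $Y$; an element of $G$ carrying $(x,y)$ to such an edge is the desired $m$.

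For (a)$\Rightarrow$(b), we argue the contrapositive and exhibit, for non-primitive $G$, a rank $2$ map $t$ with $t\notin EG$. If $G$ is transitive but imprimitive, take a non-trivial block system with blocks $B_1,\dots,B_r$ ($r\ge2$, each $|B_i|\ge2$), choose distinct $a,b\in B_2$, and let $t$ send $B_1$ to $a$ and $\Omega\setminus B_1$ to $b$. If $G$ is intransitive with an orbit $O$ of size $\ge2$, choose distinct $a,b\in O$ and let $t$ send $O$ to $a$ and $\Omega\setminus O$ to $b$; and if every $G$-orbit is a singleton then $G$ is trivial, and for $n\ge3$ the map $t$ sending $1$ to $2$ and $\{2,\dots,n\}$ to $1$ serves. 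In each case, $tg^{-1}$ could be idempotent only if $ag^{-1}$ and $bg^{-1}$ lay in different classes of $\ker t$; but $a$ and $b$ lie in a common block (respectively, orbit), and $g^{-1}$ maps blocks to blocks (respectively, fixes the orbit setwise), so $ag^{-1}$ and $bg^{-1}$ lie in a common block (respectively, orbit), hence in a single class of $\ker t$. So $t\ne eg$ for all idempotents $e$ and all $g\in G$, and $S_t$ is not strongly factorizable. (When $n=2$ every rank $2$ map is a bijection and $S_t$ is a group, a boundary case treated separately.)

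The step I expect to carry the weight is the interplay between the reduction and the primitive-case lemma: noticing that the requirement $e\in S_t$ is free of charge (because $e=sg^{-1}$), and then establishing that for a primitive group every cut of $\Omega$ is crossed, in each direction, by every non-diagonal orbital digraph. The efficient route to the latter is the strong connectedness of the orbital \emph{digraphs}, proved as above from the triviality of $G$-invariant equivalence relations and partial orders, rather than the weaker connectedness of the undirected orbital graphs, which would handle only the self-paired orbitals cleanly.
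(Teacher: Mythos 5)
Your argument is correct and is essentially the paper's: your reduction (the idempotent factor is forced to be $sg^{-1}$, so one only needs, for each rank $2$ element with kernel $\{P,Q\}$ and image points $c,d$, some $g\in G$ with $cg^{-1}\in P$ and $dg^{-1}\in Q$) is exactly the ordered $2$-ut property, and the paper likewise proves ordered $2$-ut $\Leftrightarrow$ primitivity via strong connectedness of orbital digraphs in one direction and a block/orbit construction in the other; your invariant-preorder proof of strong connectedness is just a self-contained version of the standard facts the paper cites. The only loose end is the deferred $n=2$ case, which you flag but do not settle: under the reading the paper actually uses (as in the abstract and the Section~\ref{three} lemma, $g$ ranges over the given group $G$), the sole non-primitive subgroup $G=\{1\}$ is excluded because $t=(1\,2)$ is not idempotent and hence is not of the form $eg$ with $g\in G$; be aware that if one instead insisted on $g$ ranging over the full group of units of $\langle G,t\rangle$, then $\langle G,t\rangle$ being a group would make condition (a) hold there, so your parenthetical ``$S_t$ is a group'' remark must be resolved with the former reading to match the theorem.
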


\begin{theorem}\label{rankn}
Let $G\le \sym$ and let $k$ be a fixed natural number such that $6\le k\le n$.   The following are equivalent: 
\begin{enumerate}
\item for all rank $k$ transformations $t\in \trans$ the semigroup $\langle G,t\rangle$ is strongly factorizable; 
\item $G=\sym$ or  $G=\alt$ (with $n\ne k$) in their natural action on $n$ points.
\end{enumerate}
\end{theorem}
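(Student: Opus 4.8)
The strategy is to convert strong factorizability into a combinatorial condition, prove $(b)\Rightarrow(a)$ by a transversal-plus-parity argument, and prove $(a)\Rightarrow(b)$ by combining the known classification of groups generating regular transformation semigroups with a direct analysis of the remaining small-degree and ``$k$ near $n$'' configurations. The common tool is the following remark: for any $S$ with $G\le S\le\trans$ and any $s\in S$, one has $s=eg$ with $e^2=e\in S$ and $g\in G$ if and only if $sgs=s$ for some $g\in G$ --- from $s=eg$ one gets $e=sg^{-1}\in S$, and $e^2=e$ is equivalent to $sg^{-1}s=s$, while conversely $sgs=s$ gives $s=(sg)g^{-1}$ with $sg$ idempotent. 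Writing $\Omega s$ for the image of $s$ and $B_a=\{x\in\Omega:xs=a\}$ for the kernel class of $a\in\Omega s$, the identity $sgs=s$ holds exactly when $ag\in B_a$ for every $a\in\Omega s$: that is, when $g$ carries $\Omega s$ onto a transversal of $\ker s$ that respects the labelling of the classes by their image points. So $\langle G,t\rangle$ is strongly factorizable precisely when every $s\in\langle G,t\rangle$ admits such a $g\in G$.

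For $(b)\Rightarrow(a)$, let $G$ be $\sym$, or $\alt$ with $n\ne k$, let $t\in\trans$ have rank $k$, and let $s\in\langle G,t\rangle$ have rank $r$. If $r=n$ then $s$ is a permutation, hence lies in $G$ (when $G=\alt$ we have $k<n$, so any element of $\langle G,t\rangle$ involving a factor $t$ has rank below $n$), and $s=1\cdot s$. If $r<n$, pick a transversal $T$ of $\ker s$; then $(s|_T)^{-1}$ sends each $a\in\Omega s$ into $B_a$, and we extend it to a bijection $g$ of $\Omega$. This $g$ works when $G=\sym$. When $G=\alt$ we correct its parity: for $r\le n-2$ we replace the extension on $\Omega\setminus\Omega s$ by its composite with a transposition of the (at least two) points of $\Omega\setminus T$; for $r=n-1$ the two available transversals $T$ yield permutations differing by a transposition, one of which is even. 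In every case $sgs=s$ with $g\in G$, so $\langle G,t\rangle$ is strongly factorizable. The same computation shows why $n\ne k$ is required: if $k=n$ and $t$ is an odd permutation then $\langle\alt,t\rangle=\sym$, whose odd elements are not of the form $eg$ with $e$ idempotent (so $e=1$) and $g\in\alt$.

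For $(a)\Rightarrow(b)$, note first that $s=eg$ forces $s=s(g^{-1}e)s$, so (a) makes $\langle G,t\rangle$ regular for every rank-$k$ transformation $t$; by the known description of such groups, $G$ then has the $k$-universal transversal property. When $6\le k$ and $n-k$ is not too small, the classification of $k$-universal-transversal groups --- which, like the known classification of the $k$-transitive and $k$-homogeneous groups, ultimately rests on the Livingstone--Wagner theorem and CFSG --- leaves only $G=\sym$ and $G=\alt$, together with a short explicit list of Mathieu- and projective-type exceptions in bounded degree. In the remaining, ``$k$ near $n$'' range one argues directly: for $k=n$ the $k$-universal transversal condition is vacuous, yet every rank-$n$ transformation is a permutation and $\langle G,t\rangle$ is then a group whose only idempotent is the identity, so (a) forces $t\in G$ for all $t\in\sym$ and hence $G=\sym$; for $k$ just below $n$ one plays the labelled-transversal criterion against the low-rank products $g_0tg_1\cdots tg_m$ of $\langle G,t\rangle$ --- taking $t$ to be an idempotent whose kernel is concentrated in a single large class --- to force $G$ to act at least as transitively as $\alt$, whence $G\in\{\sym,\alt\}$. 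Each surviving exceptional group is eliminated individually by exhibiting a rank-$k$ transformation $t$ and an element $s\in\langle G,t\rangle$ whose labelled kernel is realized by no $g\in G$ --- precisely where strong factorizability outstrips mere regularity --- and $G=\alt$ with $n=k$ is excluded by the computation above; this yields (b).

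The main obstacle is this last part of $(a)\Rightarrow(b)$: the direct treatment of $k$ close to $n$ and the case-by-case elimination of the small exceptional groups. Both require explicit information about the relevant permutation representations and a careful choice of the witnesses $t$ and $s$; by contrast the reduction, the implication $(b)\Rightarrow(a)$, and the appeals to the regularity and homogeneity classifications are routine.
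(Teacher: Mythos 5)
Your opening reduction is correct: $s=eg$ with $e^2=e$, $g\in G$ iff $sg^{-1}s=s$, i.e.\ iff some element of $G$ carries the image $\Omega s$ to a transversal of $\ker s$ respecting the labelling of the classes. This is exactly the paper's ``ordered $k$-ut'' criterion in disguise, and your direct verification of $(b)\Rightarrow(a)$ for $\sym$ and $\alt$ (parity adjustment when $r\le n-2$, the two transversals when $r=n-1$, and the explanation of why $n=k$ must be excluded for $\alt$) is sound. The genuine gap is in $(a)\Rightarrow(b)$, in the range of $k$ between roughly $n/2$ and $n$. From (a) you extract only regularity, hence the \emph{unordered} $k$-ut property, and then invoke ``the classification of $k$-universal-transversal groups'' whenever ``$n-k$ is not too small''; but that classification (Ara\'ujo--Cameron) is proved only for $k\le\lfloor(n+1)/2\rfloor$, so it says nothing about, say, $n=20$, $k=13$. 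Your ad hoc treatment covers $k=n$ (correctly) and, only in outline, ``$k$ just below $n$'', where the claim that the criterion ``forces $G$ to act at least as transitively as $\alt$'' is unsubstantiated and, for $k$ not literally within one or two of $n$, is not obtainable by the kind of direct argument you gesture at. Thus the whole middle range $\lfloor(n+1)/2\rfloor<k<n$ is untreated. (A lesser miscalibration: for $k\ge6$ the $k$-ut classification has no exceptional groups, so the promised ``case-by-case elimination of Mathieu- and projective-type exceptions'' addresses a non-issue rather than the real difficulty.)

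The missing ingredient, and the way the paper closes this range, is a downward reduction that retains the \emph{ordered} information. Strong factorizability for all rank-$k$ maps is equivalent to the ordered $k$-ut property (apply your own criterion to a map $t$ with prescribed labelled kernel and image, as in the paper's lemma in Section~\ref{three}); ordered $k$-ut trivially implies ordered $(k-1)$-ut, hence ordered $6$-ut and so $6$-ut, for every $k\ge6$ and with no restriction on $n$; and ordered $k$-ut also implies $(k-1)$-transitivity. For $n\ge11$ the classification of groups with $6$-ut (only $\sym$ and $\alt$) then settles every $k$ with $6\le k\le n$, and for $n\le10$ one uses that the only $(k-1)$-transitive groups with $k-1\ge5$ in these degrees are $\sym$ and $\alt$ (the latter needing $k\le n-1$). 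Without some such reduction from rank $k$ down to rank $6$ (or to $(k-1)$-transitivity), your appeal to the unordered $k$-ut classification does not prove the statement as claimed.
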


\begin{theorem}\label{rank5}
Let $n\ge 5$, and $G\le \sym$. 
The following are equivalent: 
\begin{enumerate}
\item for all rank $5$ transformations $t\in \trans$ the semigroup $\langle G,t\rangle$ is strongly factorizable; 
\item $G$ is $5$-transitive or $G=A_6$ ($n=6$). 
\end{enumerate}
\end{theorem}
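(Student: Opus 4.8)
The plan is to reduce strong factorizability of $\langle G,t\rangle$ to a combinatorial property of $G$ and then to identify the groups with that property using the classification of finite $4$-transitive groups. The reduction I would use is obtained by the same method as the corresponding parts of the proofs of Theorems~\ref{rank2} and~\ref{rankn}: for a transformation $s$ of rank $m\le 5$, one has $s=eg$ with $e^2=e\in\langle G,t\rangle$ and $g\in G$ if and only if there is $h\in G$ with $sh$ idempotent (take $e=sh$ and $g=h^{-1}$), and $sh$ is idempotent precisely when $yh$ lies in the kernel class of $s$ that is mapped onto $y$, for each $y\in\Omega s$. Hence $\langle G,t\rangle$ is strongly factorizable for \emph{every} rank $5$ transformation $t$ if and only if $G$ has the property $(\ast_5)$: for every partition $\{C_1,\dots,C_5\}$ of $\Omega$ into five parts, every five distinct points $y_1,\dots,y_5$, and every pairing $y_i\leftrightarrow C_i$, there is $g\in G$ with $y_ig\in C_i$ for all $i$. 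The elements of $\langle G,t\rangle$ of rank below $5$ are covered automatically, because for $n\ge 5$ property $(\ast_5)$ implies the analogous statement for partitions into $m<5$ parts: refine such a partition by splitting a part of size at least two and adjoin a fresh point. (In the boundary case $n=5$ a rank $5$ transformation is a permutation and $\langle G,t\rangle$ need not have $G$ as its group of units, but a short separate argument shows that here $(\ast_5)$ and condition~(1) both amount to $G=\sym$.) Finally, $(\ast_5)$ forces $G$ to be $4$-transitive: given $4$-tuples $(a_1,\dots,a_4)$ and $(b_1,\dots,b_4)$, apply $(\ast_5)$ with $C_i=\{b_i\}$ for $i\le 4$, with $C_5$ the complement, with $y_i=a_i$, and with $y_5$ any further point.

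With this in hand, the implication (2)$\Rightarrow$(1) is short. If $G$ is $5$-transitive, choose $z_i\in C_i$ and use $5$-transitivity to send $(y_1,\dots,y_5)$ to $(z_1,\dots,z_5)$. If $G=A_6$ and $n=6$, then every five-part partition of $\Omega$ has a part $\{v,w\}$ of size two and $Y$ omits exactly one point, so the constraints determine the required $g\in\sym$ up to postcomposition with $(v\ w)$; exactly one of the two candidates is even, hence lies in $A_6$. So $A_6$ on six points has $(\ast_5)$, which gives (2)$\Rightarrow$(1).

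For (1)$\Rightarrow$(2), property $(\ast_5)$ holds, so $G$ is $4$-transitive, and by the classification of finite $4$-transitive groups $G$ is one of $\sym$, $\alt$ ($n\ge 6$), $\M_{11}$ on $11$ points, $\M_{12}$, $\M_{23}$, or $\M_{24}$. All of these are $5$-transitive except $A_6$ (on six points), $\M_{11}$, and $\M_{23}$, and $A_6$ already occurs in~(2), so it suffices to show that $\M_{11}$ and $\M_{23}$ do not have $(\ast_5)$. Here I would use that these are the automorphism groups of the Steiner systems $S(4,5,11)$ and $S(4,7,23)$. Fix three points $a,b,c$. In $S(4,5,11)$ there are exactly four blocks through $\{a,b,c\}$, and since every $4$-subset lies in a unique block the remaining eight points split into four pairs $\Pi_1,\dots,\Pi_4$ (the two extra points of each of those blocks), which are permuted by $(\M_{11})_{a,b,c}$. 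Now take $C_1=\{a\}$, $C_2=\{b\}$, $C_3=\{c\}$, $C_4=\Pi_1$, $C_5=\Pi_2\cup\Pi_3\cup\Pi_4$, let $Y=\{a,b,c\}\cup\Pi_2$, and pair the two points of $\Pi_2$ with $C_4$ and with $C_5$. Any $g$ respecting these constraints fixes $a,b,c$, hence permutes the $\Pi_i$, so the two points of $\Pi_2$ go into a single common pair; they cannot land one inside $\Pi_1$ and one outside. Thus $\M_{11}$ fails $(\ast_5)$, and the same argument applies to $\M_{23}$ via $S(4,7,23)$, where three points lie in five blocks and the remaining twenty points split into five quadruples, one of which is used as $C_4$ while $Y$ contains two points of another.

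The genuinely substantive steps are the reduction to $(\ast_5)$ — in particular dealing with the rank-below-$5$ elements and with the boundary case $n=5$ — and the reliance on the classification of finite $4$-transitive groups; once the Steiner-system structure is exploited, disqualifying $\M_{11}$ and $\M_{23}$ is short and could alternatively be checked by direct computation since both groups are small. I expect the reduction to be the part requiring the most care, though it runs parallel to the corresponding parts of the proofs of Theorems~\ref{rank2} and~\ref{rankn}.
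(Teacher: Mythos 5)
Your proposal is correct, and the semigroup half (the equivalence between strong factorizability for all rank~$5$ maps and your property $(\ast_5)$, which is exactly the paper's ordered $5$-ut property, together with the descent to lower-rank elements by refining partitions) is essentially the paper's own reduction via its Section~\ref{three} lemma and Proposition~\ref{p:basicprop}. Where you genuinely diverge is in the group-theoretic classification. The paper does not classify ordered $5$-ut from scratch: it quotes the classification of $5$-ut groups from \cite{ac} (for $n\ge 11$ these are the $5$-homogeneous groups and $\mathrm{P\Gamma L}(2,32)$), notes that $5$-homogeneous implies $5$-transitive, kills $\mathrm{P\Gamma L}(2,32)$ because ordered $5$-ut forces $4$-transitivity, and handles $n\le 10$ by listing the $4$-transitive groups of small degree; in particular $M_{11}$ and $M_{23}$ never need to be examined, since they do not even have $5$-ut. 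You instead use only the implication ``ordered $5$-ut $\Rightarrow$ $4$-transitive'' plus the CFSG classification of $4$-transitive groups, which leaves you the extra task of excluding $M_{11}$ and $M_{23}$; your Steiner-system configurations do this correctly (four blocks of $S(4,5,11)$, resp.\ five blocks of $S(4,7,23)$, through three fixed points, with the complementary pairs, resp.\ quadruples, permuted by the three-point stabiliser), and your parity argument for $A_6$ supplies a detail the paper dismisses with ``clearly''. The trade-off: your route is more self-contained with respect to the $k$-ut literature but needs the ad hoc Mathieu exclusions; the paper's route outsources all such case analysis to \cite{ac}. Both rest on the classification of finite simple groups. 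One small caveat: your treatment of the boundary case $n=5$ (claiming condition~(1) forces $G=S_5$) is valid only under the reading, implicit in the paper's Section~\ref{three} lemma, that factorizability is tested as $EG$ with the given group $G$; under the literal ``group of units'' definition a rank~$5$ map on $5$ points generates a group, which is trivially strongly factorizable, so this is a quirk of the theorem's statement rather than a defect of your argument.
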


\begin{theorem}\label{rank4}
Let $n\ge 4$, and $G\le \sym$. 
If $G$ is $4$-transitive, or $G$ is one of $A_5$ ($n=5$),  or $M_{11}$ ($n=12$), then 
  the semigroup $\langle G,t\rangle$ is strongly factorizable, for all rank $4$ transformations $t\in \trans$. 

Any other group satisfying this property  
contains $\mathrm{PSL}(2,2^p)$ ($n=2^p+1$), where $2^p-1$ is a (Mersenne) prime.
\end{theorem}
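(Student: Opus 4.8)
The plan is to reduce the statement to a combinatorial property of $G$ and then combine a handful of direct verifications with the classification of $3$-transitive groups. Fix $t$ of rank $4$ and put $S=\langle G,t\rangle$. For $s\in S$ and $g\in G$ one has $s=eg$ with $e^2=e\in S$ iff $sg^{-1}s=s$, and I would unwind this to: some $g^{-1}\in G$ maps $\Omega s$ onto a transversal of $\ker s$ realising the bijection between $\Omega/\ker s$ and $\Omega s$ induced by $s$. Applying this both to the rank-$4$ elements of $S$ --- the words $g_0tg_1t\cdots tg_m$ that do not drop rank, whose kernel is $\ker t$ pulled back along $g_0$, whose image is $(\Omega t)g_m$, and whose induced bijection is a permutation-twist of that of $t$ --- and to the non-invertible elements of lower rank --- whose kernel has a class of size $\ge 2$, hence refines to a partition into $4$ parts while the image is padded to a $4$-set at will --- one obtains that $S$ is strongly factorizable for \emph{every} rank-$4$ transformation precisely when $G$ has the property $(\ast)$: for all $4$-subsets $\{a_1,\dots,a_4\}$ and all ordered partitions $(P_1,\dots,P_4)$ of $\Omega$ into $4$ parts, there is $g\in G$ with $a_ig\in P_i$ for $i=1,2,3,4$. (For $n=4$, $(\ast)$ just says $G=\sym$.)

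A $4$-transitive $G$ has $(\ast)$ at once: pick distinct $p_i\in P_i$ and send $a_i\mapsto p_i$. For $G=A_5$ on $5$ points one uses that $A_5$ is sharply $3$-transitive and that a $4$-part partition of five points has shape $(2,1,1,1)$: once the three ``singleton'' points of $A$ are placed, the last point of $A$ lands in the $2$-element part automatically, so $(\ast)$ holds. For $G=M_{11}$ on $12$ points --- only $3$-transitive, with three-point stabiliser of order $6$ --- once $a_ig\in P_i$ is imposed for $i\le 3$ the point $a_4g$ ranges over an orbit of this order-$6$ group on the remaining $9$ points; letting the representatives $p_i\in P_i$ vary too, I expect a finite, machine-assisted check is needed to confirm that the resulting union of orbits always meets $P_4$, giving $(\ast)$.

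For the converse, taking $P_1,P_2,P_3$ to be arbitrary singletons shows $(\ast)$ forces $G$ to be $3$-transitive (for $n\ge 5$). By the classification of finite $3$-transitive groups (a consequence of the classification of finite simple groups), $G$ is then $\sym$ or $\alt$ (which are $4$-transitive once $n\ge 6$, $A_5$ being the exception); or $\agl(d,2)$ or $2^4{:}A_7$; or one of $M_{11},M_{12},M_{22},M_{22}{:}2,M_{23},M_{24}$ (of these, $M_{11}$ on $11$ points, $M_{12}$, $M_{23}$ and $M_{24}$ are $4$-transitive); or a group with $\pgl(2,q)\le G\le\pgaml(2,q)$ ($q$ odd) or $\psl(2,q)\le G\le\pgaml(2,q)$ ($q$ even), in degree $n=q+1$, together with the Zassenhaus sharply $3$-transitive groups $M(q^2)$ (such as $M_{10}$). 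Thus the cases still to treat are $\agl(d,2)$ with $d\ge 3$, $2^4{:}A_7$, $M_{22}$ and $M_{22}{:}2$, and the $\psl(2,q)$ families. For $\agl(d,2)$: take $A$ to be a $2$-flat; since $\agl(d,2)$ is $3$-transitive and affine maps satisfy $g(a_1)+g(a_2)+g(a_3)=g(a_4)$ whenever $a_1+a_2+a_3+a_4=0$, a valid $g$ exists iff the partition has a transversal of zero point-sum. Now $\{100,101,110,111\}\mid\{000,001\}\mid\{010\}\mid\{011\}$ is a $4$-partition of $\mathbb{F}_2^3$ with no such transversal (its transversal-sums fill the coset $\{100,101,110,111\}$, missing $0$), and for $d\ge 4$ one extends this by enlarging one part; hence $(\ast)$ fails, a fortiori for $2^4{:}A_7\le\agl(4,2)$. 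For $M_{22}$ and $M_{22}{:}2$: take $A$ inside a hexad of the Steiner system $S(3,6,22)$, so $Ag$ always lies in a hexad, and produce (by computation) a $4$-partition no transversal of which lies in a common hexad.

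Finally, for the projective families, realise the $q+1$ points as $\mathbb{F}_q\cup\{\infty\}$ and take the partition $(\{\infty\},\{0\},H,\mathbb{F}_q^{*}\setminus H)$ with $H$ a proper non-trivial subgroup of the cyclic group $\mathbb{F}_q^{*}$. A group element compatible with this matching must fix $0$ and $\infty$, hence has the form $x\mapsto\mu x^{\sigma}$ with $\sigma$ a power of the Frobenius; the demand $1g\in H$ forces $\mu\in H$, and then $\lambda g=\mu\lambda^{\sigma}$ lies in $H$ whenever $\lambda\in H$, as $H$ --- the unique subgroup of its order in $\mathbb{F}_q^{*}$ --- is Frobenius-invariant. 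So, picking $A$ with cross-ratio in $H\setminus\{1\}$, no such element exists and $(\ast)$ fails, for every $G$ between $\psl(2,q)$ and $\pgaml(2,q)$ and for the $M(q^2)$ as well. Such an $H$ exists precisely when $q-1$ is not prime; as $q-1$ is even for odd $q$, $(\ast)$ fails for every odd $q\ge 5$ (and $q=3$ gives $\pgl(2,3)=\sym$, already listed, while $\psl(2,3)=A_4$ is not $3$-transitive). Hence for $q=2^e$ even, $(\ast)$ forces $2^e-1$ to be prime --- a Mersenne prime, so $e$ is prime; putting $p=e$, we get $G\supseteq\psl(2,2^p)$ in degree $2^p+1$, exactly the exceptional conclusion. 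I would not attempt here to decide whether any of these $\psl(2,2^p)$-groups actually has $(\ast)$; that is what leaves the classification only ``almost'' complete. The chief obstacles I foresee are the two Mathieu computations and correctly identifying the set of reachable cross-ratios; the rest is bookkeeping with $3$-transitivity and the subgroup lattice of $\mathbb{F}_q^{*}$.
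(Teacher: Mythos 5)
Your reduction of ``strongly factorizable for every rank-$4$ map'' to the matching condition $(\ast)$ is exactly the paper's route: $(\ast)$ is what the paper calls the ordered $4$-ut property, and your treatment of lower-rank elements by refining the kernel and padding the image is Proposition~\ref{p:basicprop}(b) combined with the Lemma of Section~\ref{three}. Your positive cases also agree ($4$-transitivity is immediate, and your sharp $3$-transitivity argument for $A_5$ is a nice substitute for the paper's ``clearly''). Where you genuinely diverge is the converse. The paper does not re-derive the candidate list: it quotes the classification of groups with the \emph{unordered} $4$-ut property from \cite{ac}, so only $4$-homogeneous groups, $\M_{11}$ ($n=12$) and almost simple groups with socle $\psl(2,q)$ ($q$ prime or $q=2^p$) survive, and it eliminates the unwanted ones by the observation that ordered $4$-ut forces $3$-primitivity (Proposition~\ref{p:basicprop}(e)), the $2$-point stabilisers having normal cyclic subgroups of composite order $q-1$. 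You instead use only ``$(\ast)$ implies $3$-transitive'' and run through the CFSG list of $3$-transitive groups, which obliges you to eliminate $\agl(d,2)$, $2^4{:}A_7$, $\M_{22}$, $\M_{22}{:}2$ and the Zassenhaus groups yourself. Your zero-sum-transversal argument for $\agl(d,2)$ (hence $2^4{:}A_7$) and your subgroup-$H$/Frobenius-invariance argument for all subgroups of $\pgaml(2,q)$ with $q-1$ composite are correct and self-contained; the latter is in fact a uniform exclusion, different from the paper's $3$-primitivity count, and it handles every $3$-transitive group between $\psl(2,q)$ and $\pgaml(2,q)$ at once, Zassenhaus groups included.

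The only real gaps are the two places you defer to unexecuted computations. For $\M_{11}$ ($n=12$) you only ``expect'' a machine check; the paper closes this case conceptually: the $3$-point stabiliser has orbits of lengths $3$ and $6$, so $\M_{11}$ is generously $3$-transitive, and combining this with its (unordered) $4$-ut property from \cite{ac} gives ordered $4$-ut by Proposition~\ref{p:basicprop}(c). For $\M_{22}$ and $\M_{22}{:}2$ you assert, without exhibiting it, a $4$-partition no transversal of which lies in a hexad; such a partition exists and needs no computer. Fix two points $x,y$; the five hexads through $\{x,y\}$ meet pairwise exactly in $\{x,y\}$, so they partition the remaining $20$ points into tetrads $T_1,\dots,T_5$; take $(P_1,P_2,P_3,P_4)=(T_1,\;T_2\cup T_3\cup T_4\cup T_5,\;\{y\},\;\{x\})$. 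A transversal contains $x$ and $y$, so a hexad containing it would be $\{x,y\}\cup T_i$, which misses either $P_1$ or $P_2$; hence no image of a $4$-subset of a hexad is ever a transversal. (Alternatively, the paper's $3$-primitivity test also disposes of $\M_{22}$: its $2$-point stabiliser preserves the partition into the five tetrads, so it is imprimitive on the remaining $20$ points.) With these two cases closed, your argument is complete and reaches exactly the stated conclusion, leaving open, as the paper does, the groups containing $\psl(2,2^p)$ with $2^p-1$ a Mersenne prime.
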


\begin{theorem}\label{rank3}
Let $n \ge 3$, $G\le \sym$. 
If $G$ satisfies one of the following properties:
\begin{enumerate}
\item $G$ is $3$-transitive;
\item $\mathrm{PSL}(2,q)\le G \le \mathrm{P}\Gamma \mathrm{L}(2,q)$ where $q \equiv 3~\mbox{ mod }~4$ ($n=q+1$); 
\item $\mathrm{PSL}(2,q)\le G \le \mathrm{P}\Sigma \mathrm{L}(2,q)$ where $q \equiv 1~\mbox{ mod }~4$ ($n=q+1$); 
\item $G=\mathrm{Sp}(2d,2)$, $d \ge 3$, in either of its $2$-transitive representations ($n=2^{2d-1}\pm 2^{d-1}$);
\item $G=2^d: \mathrm{Sp}(d,2)$, $d \ge4$ and even ($n=2^d$);
\item $G$ is one of $A_4$ ($n=4$), $\mathrm{PSL}(2,11)$ ($n=11$), $2^4:A_6$ ($n=16$), $2^6:G_2(2)$ or its subgroup of index $2$ ($n=64$), Higman--Sims ($n=176$), $\mathrm{Co}_3$ ($n=276$);
\end{enumerate}
then the semigroup  $\langle G,t\rangle$ is strongly factorizable, for all rank $3$ transformations $t\in \trans$. 

Any other group satisfying the previous property must satisfy
$\agl(1,2^p)\le G\le\agaml(1,2^p)$ with $p$ and $2^p-1$ prime.
\end{theorem}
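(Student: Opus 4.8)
The plan is to recast strong factorizability of $\langle G,t\rangle$ as a transparent combinatorial property of $G$, to force that property up to $2$-transitivity, and then to run through the classification of finite $2$-transitive groups; I begin with the translation.

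\emph{Step 1: the combinatorial translation.} For $s\in\langle G,t\rangle$ and $g\in G$, the transformation $sg^{-1}$ is idempotent exactly when $g$ agrees with $s$ on some transversal $I$ of $\ker s$; since $sg^{-1}$ then lies in $\langle G,t\rangle$ automatically, $s$ can be written as $eg$ with $e^2=e\in\langle G,t\rangle$ and $g\in G$ if and only if such a $g$ exists. For $n\ge 4$ every $s\in\langle G,t\rangle\setminus G$ has rank at most $3$, with $\ker s$ a partition into at most $3$ blocks and $\Omega s$ a set of the same size. Letting $t$ range over all rank-$3$ transformations, so that $\ker t$, $\Omega t$ and the bijection between them become arbitrary, one sees that $\langle G,t\rangle$ is strongly factorizable for all rank-$3$ $t$ precisely when $G$ has the following property $(\star)$: for every ordered partition $(C_1,C_2,C_3)$ of $\Omega$ into nonempty blocks and every triple $(a_1,a_2,a_3)$ of distinct points there is $g\in G$ with $a_ig\in C_i$ for $i=1,2,3$. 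The implication from strong factorizability to $(\star)$ is immediate by taking $s=t$; for the converse one must verify every element of $\langle G,t\rangle$, and here the rank-$1$ and rank-$2$ elements are handled as soon as $G$ is $2$-transitive (which $(\star)$ forces, see Step~2) while the rank-$3$ ones---including those outside $GtG$---are covered by $(\star)$ applied to the pair $(\ker s,\Omega s)$.

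\emph{Step 2: $2$-transitivity and the case division.} Applying $(\star)$ to partitions of the form $(\{x\},\{y\},\Omega\setminus\{x,y\})$ shows at once that $G$ is $2$-transitive, hence primitive, in agreement with Theorem~\ref{rank2}; at the opposite end, $(\star)$ holds automatically when $G$ is $3$-homogeneous with the stabiliser of a $3$-set inducing $S_3$ on it, and in general it is a condition on the orbits of $G$ on ordered triples relative to ordered $3$-partitions. From here on $G$ is a finite $2$-transitive group, and by the classification these are the affine groups $V\rtimes G_0$, with $V$ elementary abelian of order $n=p^d$ and $G_0\le\mathrm{GL}(d,p)$ transitive on $V\setminus\{0\}$, together with the almost simple groups, whose socles are $A_n$, $\psl(d,q)$, a unitary, Suzuki or Ree group, $\Sp(2d,2)$, or one of a handful of sporadic examples. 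For the almost simple candidates the verification of $(\star)$ is done family by family: the $3$-transitive groups---those containing $A_n$, the relevant Mathieu groups, the overgroups of $\pgl(2,q)$---satisfy $(\star)$ trivially and fall under case~(a); for $\psl(2,q)\le G\le\pgaml(2,q)$ one works on the projective line and controls the $\psl(2,q)$-orbits on ordered triples via the cross-ratio---whether it is a square in $\GF(q)$---which produces the split $q\equiv 3\pmod 4$ versus $q\equiv1\pmod 4$ and, together with the action of field automorphisms on squares, pins down the exact ranges in~(b) and~(c); and the remaining socles (including $\psl(d,q)$ on projective points for $d\ge3$ and the unitary, Suzuki and Ree groups) are shown either to satisfy $(\star)$---accounting for $\Sp(2d,2)$ in~(d), for $\psl(2,11)$ on $11$ points, and for $\HS$ and $\Co_3$---or to fail it by exhibiting an explicit bad ordered $3$-partition, usually built from a block system or a short orbit of a point stabiliser.

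\emph{Step 3: the affine groups and the main obstacle.} For $G=V\rtimes G_0$, condition $(\star)$ asks that for every ordered triple and every ordered $3$-partition of $V$ there be an affine map $v\mapsto vM+w$ with $M\in G_0$ carrying the triple into the product of the three blocks; when $G_0$ is large this is readily checked and yields the affine groups in the list, namely $2^d{:}\Sp(d,2)$ with $d$ even, $2^4{:}A_6$, $2^6{:}G_2(2)$ and its index-$2$ subgroup, and $A_4$. When $G_0$ is small one tries to manufacture a counterexample partition; in the one-dimensional case $\agl(1,q)\le G\le\agaml(1,q)$ a short calculation reduces $(\star)$ to the statement that for every $\alpha\in\GF(q)\setminus\{0,1\}$ and every ordered $3$-partition $(C_1,C_2,C_3)$ of $\GF(q)$ there exist $x\in C_1$ and $y\in C_2$ with $\alpha x+(1-\alpha)y\in C_3$, with an additional Frobenius twist permitted when $G=\agaml(1,q)$. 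Partitions assembled from unions of cosets of a proper nontrivial subgroup of the multiplicative group $\GF(q)\setminus\{0\}$ rule out almost every $q$, but this device is unavailable exactly when $q-1$ is prime, i.e.\ when $q=2^p$ with $2^p-1$ a Mersenne prime. Deciding whether the at most two groups between $\agl(1,2^p)$ and $\agaml(1,2^p)$---at most two since $\mathrm{Gal}(\GF(2^p)/\GF(2))$ has the prime order $p$---satisfy $(\star)$ seems to be a genuinely open problem about the additive and multiplicative structure of $\GF(2^p)$. This is the one family the argument cannot settle, which is exactly the content of the final clause of the theorem, and it is the hardest point of the proof.
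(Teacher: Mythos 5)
Your Step 1 is essentially the paper's own reduction: property $(\star)$ is what the paper calls the \emph{ordered $3$-ut property}, and the equivalence between ``$\langle G,t\rangle$ strongly factorizable for all rank $3$ maps $t$'' and ordered $3$-ut is the Lemma of Section~\ref{three} (the paper handles the lower-rank elements of $\langle G,t\rangle$ via the fact that ordered $3$-ut implies ordered $2$-ut and $1$-ut, Proposition~\ref{p:basicprop}(b), rather than via $2$-transitivity, but this is the same idea). The problem is Steps 2--3, where the classification is only sketched, and where your route diverges from the paper's in a way that leaves the hard content unproved. The paper does \emph{not} re-run the classification of finite $2$-transitive groups: since ordered $3$-ut implies $3$-ut, it starts from the already known (CFSG-based) classification of groups with the unordered $3$-ut property (Proposition~\ref{p:3ut}, imported from \cite{ac,abc}) and then decides ordered $3$-ut for each of those groups by three concrete devices: (i) $3$-ut together with generous $2$-transitivity implies ordered $3$-ut (Proposition~\ref{p:basicprop}(c)), with generous $2$-transitivity checked via self-paired orbitals or via the $2$-point stabiliser having orbits of pairwise distinct lengths -- this is exactly how cases (c)--(f) of the theorem ($\Sp(2d,2)$, $2^d{:}\Sp(d,2)$, $\psl(2,q)$ with $q\equiv1\bmod4$, $\psl(2,11)$, $2^4{:}A_6$, $2^6{:}G_2(2)$, $\HS$, $\Co_3$) are handled; (ii) a Paley tournament counting argument (any two vertices are dominated by exactly $(q-3)/4$ common points) for $\psl(2,q)$ with $q\equiv3\bmod4$; and (iii) failure of $2$-primitivity (Proposition~\ref{p:basicprop}(e)) for the exclusions (Suzuki groups, $\agl(1,q)$-type groups with $q-1$ composite, etc.).

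The concrete gap in your write-up is that none of these mechanisms appears. The only sufficient criterion you state -- ``$3$-homogeneous with the stabiliser of a $3$-set inducing $S_3$'' -- is just $3$-transitivity and applies to none of the groups in cases (b)--(f); in particular you give no workable method for verifying $(\star)$ for $\HS$ on $176$ points, $\Co_3$ on $276$ points, the two symplectic families, or $\psl(2,q)\le G\le\psigmal(2,q)$ with $q\equiv1\bmod4$, all of which are $2$-transitive but not $3$-homogeneous, so that even establishing the unordered $3$-ut property for them is nontrivial (it is the content of \cite{ac}). Likewise, your cross-ratio remark correctly identifies the two $\psl(2,q)$-orbits on ordered triples when $q\equiv3\bmod4$, but it does not show that every ordered $3$-partition admits a transversal with the prescribed orientation; that is precisely where the tournament count is needed. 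Finally, on the negative side, ``exhibiting an explicit bad ordered $3$-partition, usually built from a block system or a short orbit of a point stabiliser'' is the right idea for the $2$-primitivity exclusions, but carrying it out for \emph{all} remaining $2$-transitive groups (unitary, Suzuki, Ree, $\psl(d,q)$ with $d\ge3$, the various affine $G_0$, the sporadic actions) amounts to redoing a substantial part of the $3$-ut classification rather than citing it; as written, the positive half of the theorem for cases (b)--(f) and the completeness of the exclusion list are asserted, not proved.
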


These results required the repeated use of the classification of finite simple groups.  This completes the  part of the paper dealing with semigroups $S$ in which $S=EG$. 

In the second part we turn to semigroups of the form $SG$, where $S\le \trans$ and $G$ is the normalizer of $S$ in $\sym$. The general goal is to decide if some properties of $SG$ carry to $S$. The main theorem is the following.

\begin{theorem}
Let $S\le T_n$ be a transformation monoid and $G$ be the normalizer of $S$ in the symmetric group $\sym$. 
Then  $SG$ is regular if and only if  $S$ is regular.
\end{theorem}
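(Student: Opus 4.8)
The plan is to handle the two implications separately. The forward direction is routine: if $S$ is regular, take an element $sg\in SG$ (with $s\in S$, $g\in G$), choose $s'\in S$ with $ss's=s$, and put $y:=g^{-1}s'$. Since $g$ normalises $S$ we have $y=(g^{-1}s'g)g^{-1}\in SG$, and $(sg)\,y\,(sg)=s(gg^{-1})s'(sg)=(ss's)g=sg$. Hence every element of $SG$ is regular.

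For the converse, fix $s\in S$; the goal is to produce an inverse of $s$ lying in $S$. As $SG$ is regular, $s$ has an inverse $w\in SG$ (start from any $x$ with $sxs=s$ and replace it by $xsx$, so that $sws=s$ and $wsw=w$). Write $w=vg$ with $v\in S$ and $g\in G$, and let $m$ be the order of $g$. The key mechanism is: \emph{if $a\in S$, if $b\in SG$ satisfies $aba=a$, and if $b=ug^{d}$ for some $u\in S$, then $ag^{-d}\in S$.} Indeed, $aug^{d}a=a$; moving $g^{d}$ to the right past the last $a$ gives $au\,(g^{d}ag^{-d})\,g^{d}=a$, hence $au\,(g^{d}ag^{-d})=ag^{-d}$, and the left-hand side is a product of elements of $S$ (note $g^{d}ag^{-d}\in S$, since powers of $g$ normalise $S$). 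Applied to $a=s$ with $b=w=v\,g$ this yields $sg^{-1}\in S$; the same manipulation applied to $wsw=w$ yields $vgsv=v$ together with $vg^{-1}\in S$.

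The proof is then completed by two nested strong inductions on an exponent. First, for each $l\ge 0$ the element $g^{l}w$ satisfies $(sg^{-l})(g^{l}w)(sg^{-l})=(sws)\,g^{-l}=sg^{-l}$, and it can be written both as $(g^{l}vg^{-l})\,g^{l+1}$ and as $\bigl(g^{l}(vg^{-1})g^{-l}\bigr)\,g^{l+2}$, with $S$-part in $S$ in each case; so the mechanism above turns $sg^{-l}\in S$ into $sg^{-(2l+1)}\in S$ and $sg^{-(2l+2)}\in S$. Since $l=0,1$ are available as base cases and $\{2l+1:l\ge 0\}\cup\{2l+2:l\ge 0\}$ exhausts the positive integers, this gives $sg^{k}\in S$ for all $k\in\mathbb Z$, and hence $g^{i}sg^{j}\in S$ for all $i,j$ by conjugating by powers of $g$. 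Second, for each $l\ge 0$ the element $g^{l+1}s$ satisfies $(vg^{-l})(g^{l+1}s)(vg^{-l})=(vgsv)\,g^{-l}=vg^{-l}$ and lies in $S$ by the first induction; writing it as $\bigl(g^{l+1}sg^{-(l+1)}\bigr)g^{l+1}$ and as $\bigl(g^{l+1}(sg^{-1})g^{-(l+1)}\bigr)g^{l+2}$ and invoking the mechanism again, $vg^{-l}\in S$ yields $vg^{-(2l+1)}\in S$ and $vg^{-(2l+2)}\in S$; with base cases $l=0,1$ we conclude $vg^{k}\in S$ for every $k$. In particular $w=vg\in S$, so $sws=s$ with $w\in S$, i.e.\ $s$ is regular in $S$; as $s$ was arbitrary, $S$ is regular.

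The step I expect to be the real obstacle is finding the correct auxiliary elements that drive the inductions — namely $g^{l}w$ as an ``inverse'' of $sg^{-l}$ and $g^{l+1}s$ as a sandwich-inverse of $vg^{-l}$ — and the right pairs of representations for them, chosen so that the shifted exponents $2l+1$ and $2l+2$ interleave to cover all residues mod $m$ and the inductions close. The remaining ingredients (that $SG$ is a subsemigroup of $T_n$, that $G$ acts on $S$ by conjugation, and the displayed identities) are mechanical. I also note that this argument never uses that $G$ is the \emph{full} normaliser, only that $G$ normalises $S$.
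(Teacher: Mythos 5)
Your proof is correct, but it takes a genuinely different route from the paper's. The paper deduces the theorem from a stronger structural lemma: invoking McAlister's result that $S$ and $SG$ have the same idempotents, it shows that if $a\in S$ is ${\mathcal R}$-related in $SG$ to an idempotent $bh$ of $SG$, then $bh\in S$ and $a$ is ${\mathcal R}$-related to $bh$ already inside $S$; regularity of $S$ then follows from the characterisation of regular elements via ${\mathcal R}$-classes containing idempotents. Its technical core is a pair of membership chains ($h^{-k}bh\in S$ and $i_2^{-k}b\in S$), driven by the idempotency of $bh$ and finiteness of $G$ --- similar in flavour to your inductions, but organised around an idempotent rather than an inverse. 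You instead prove directly that any inverse $w=vg\in SG$ of $s\in S$ lies in $S$, via the conjugation-shift mechanism $a(ug^{d})a=a\Rightarrow ag^{-d}\in S$ together with the interleaved exponents $2l+1$ and $2l+2$; I checked the identities and the strong inductions and they close as claimed (indeed only the base facts $s,sg^{-1},v,vg^{-1}\in S$ and the second induction are strictly needed to reach $vg\in S$). Your argument is self-contained --- no appeal to McAlister's idempotent theorem --- and, as you observe, uses only that $G$ normalises $S$. What you lose relative to the paper is the finer conclusion that $a$ is ${\mathcal R}$-related in $S$ to the \emph{same} idempotent, which the paper reuses for its corollaries on completely regular, inverse and Clifford semigroups; what you gain is an elementary proof that actually exhibits an inverse of $s$ inside $S$ itself.
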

As often happens in semigroup theory, the proof is short but  tricky. 

Finally we prove the following result that generalizes a well known theorem in groups. 

\begin{theorem}
Let $S$ be a finite semigroup. Then every element of $S$ has a square root if and only if  every element in $S$ belongs to a subgroup of odd order. 
\end{theorem}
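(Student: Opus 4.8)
I would establish the two implications separately. The reverse implication is immediate: if $s\in S$ lies in a subgroup $H\le S$ of odd order, then squaring is a bijection on $H$ (it is inverted by $x\mapsto x^{(|H|+1)/2}$), so $s$ already has a square root inside $H\subseteq S$.

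For the forward implication, assume every element of $S$ has a square root; the plan has two steps. \emph{Step 1 (every element of $S$ lies in a subgroup).} Given $s\in S$, apply the hypothesis repeatedly to build a sequence $x_0=s,x_1,x_2,\dots$ in $S$ with $x_{i+1}^2=x_i$, so that $x_j^{\,2^{j-i}}=x_i$ whenever $i<j$. Since $S$ is finite the sequence repeats, say $x_i=x_j$ with $i<j$, whence $x_j^{\,N}=x_j$ with $N=2^{j-i}\ge 2$; by the index--period structure of a monogenic finite semigroup this forces $\langle x_j\rangle$ to be a finite cyclic group, and as $s=x_j^{\,2^{i}}$ is one of its powers, $s$ lies in that group. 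Hence $S$ is a union of groups and is partitioned into the maximal subgroups $H_e$ attached to its idempotents $e$.

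\emph{Step 2 (each $H_e$ has odd order).} The crucial point is that a square root cannot leave an $\mathcal H$-class: if $s\in H_e$ and $x^2=s$, then by Step 1 the element $x$ belongs to some maximal subgroup $H_f$, all powers of $x$ remain in $H_f$, so $s=x^2\in H_f$; since $s$ lies in a unique $\mathcal H$-class, $H_f=H_e$ and $x\in H_e$. Therefore squaring is onto (hence bijective) on the finite group $H_e$, so $H_e$ has no involution and, by Cauchy's theorem, has odd order --- which is exactly the classical group statement the theorem generalizes. As every element of $S$ lies in some $H_e$, the proof is complete.

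I expect Step 1 to carry the genuine content: it is the only place where finiteness of $S$ (rather than of a single group) is used, and the mild trick is to convert the hypothesis ``arbitrarily iterated square roots exist'' into ``completely regular'' via the pigeonhole argument on the sequence $(x_i)$. In Step 2 the one thing to check carefully is that the ambient square root of an element of $H_e$ really falls back inside $H_e$ rather than into a strictly larger subsemigroup; once that is secured, everything reduces to the well-known group fact.
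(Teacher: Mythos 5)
Your proof is correct, but it reaches the conclusion by a different key mechanism than the paper. The paper first proves a stand-alone lemma: since the squaring map $x\mapsto x^2$ is surjective on the finite semigroup $S$, it is injective, hence its restriction to the invariant subset $\langle x\rangle$ is injective and therefore surjective, so every $x$ has a square root \emph{inside} $\langle x\rangle$. From $\sqrt{x}=x^n$ it gets $x=x^{2n}$, so $\langle x\rangle$ is a cyclic group, the $\mathcal{H}$-class of $x$ is a group, and since square roots of its elements necessarily lie in it, squaring is surjective there and the group has odd order. You avoid the global surjective-implies-injective step entirely: you iterate square roots and use pigeonhole to obtain $x_j^N=x_j$ with $N\ge 2$, which shows directly that every element lies in a (maximal) subgroup, and you then argue that a square root of an element of $H_e$ must land back in $H_e$ because it lies in some maximal subgroup $H_f$, its powers stay in $H_f$, and distinct maximal subgroups ($\mathcal{H}$-classes) are disjoint. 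Both routes finish identically: squaring bijective on a finite group forces odd order via Cauchy, and the converse is the standard odd-order group fact. What the paper's route buys is the stronger intermediate statement $\sqrt{x}\in\langle x\rangle$, which it reuses elsewhere (for the corollary on normalizers $SG$, and to conclude that a missing square root cannot be supplied by any finite oversemigroup); what your route buys is a more elementary, self-contained argument that never invokes global bijectivity of the squaring map. One trivial slip: in Step~1 the exponent should be $s=x_j^{\,2^{j}}$ (take $i=0$ in $x_j^{\,2^{j-i}}=x_i$), not $s=x_j^{\,2^{i}}$; this is harmless, since all you need is that $s$ is a positive power of $x_j$.
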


A semigroup $S$ is said to be {\em factorizable} if there exist two sets $A,B\subset S$ such that $S=AB$. Factorizable semigroups were prompted by the study of extensions (direct products, Zappa--Sz\'ep and Catino extensions \cite{catino}), but the study went far beyond that initial motivation leading to many papers in very different contexts (structural semigroup theory, topological semigroups, presentations, automata and languages, combinatorial semigroup theory, Morita equivalences, etc.). If  $S=EG$, where $E$ is its set of idempotents and $G$ is its group of units, then $S$ is said to be {strongly factorizable}.
The class of strongly factorizable semigroups contains the two most studied transformation semigroups: $\trans$, the full transformation monoid on a finite set  and the monoid of endomorphisms of a finite-dimensional vector space. As, by an argument similar to the one used to prove Cayley's Theorem in groups, every finite semigroup embeds in some finite $\trans$, it follows that every finite semigroup is a subsemigroup of a strongly factorizable semigroup. Recall that a semigroup $S$ is said to be {\em regular} if for all $a\in S$ there exists $b\in S$ (called an {\em inverse} of $a$) such that $a=aba$ and $b=bab$. If every element in a regular semigroup has only one inverse, then the semigroup is said to be {\em inverse}.  Not every strongly factorizable semigroup is inverse (consider a non-commutative semigroup of idempotents with identity), but they are all regular semigroups. In fact, if $eg\in EG$ then $eg=(eg)(g^{-1}e)(eg)$ and $g^{-1}e=(g^{-1}e)(eg)(g^{-1}e)$.

Finally we note that in the context of inverse semigroups (where strongly factorizable semigroups are simply called factorizable) this topic is an entire body of knowledge on its own. (For details  we suggest the lovely survey paper \cite{FitzGerald2009} and the references therein.)  In the context of groups, it is usually assumed that at least one of $A$ and $B$ is a subgroup; many classifications are known, notably that of Liebeck, Praeger and Saxl~\cite{lps,lps2}, who found
all factorizations of almost simple groups where both factors are subgroups.

Throughout the twentieth century there was a doctrine which stated that a problem in semigroups was considered solved when reduced to a question in groups. This dramatically changed in this century when it was realized that it would be much more productive for both sides to keep an ongoing conversation. One of the driving forces of this conversation has been the following general problem, whose study has led to significant research on permutation groups:
\begin{quote}
Classify the pairs $(G,a)$, where $a$ is a map on a finite set $\Omega$ and $G$ is a group of permutations of $\Omega$,
such that the semigroup  $\langle G,a\rangle$ generated by $a$ and $G$  has a given property $P$.
\end{quote}

A very important class of groups that falls under this general scheme is that of \emph{synchronizing groups}, groups of permutations on a set that together with any non-invertible map on the same set generate a constant (see \cite{steinberg,abc,abcrs,arcameron22,acs,cameron,neumann}). These groups are very interesting from a group theoretic  point of view and are linked  to the \emph{\v{C}ern\'y conjecture},  a longstanding open problem in automata theory.

Three other sample instances of the general question are the following: 
\begin{enumerate}
\item Let $A\subseteq \trans$; classify the permutation groups $G\le \sym$ such that $\langle G,a\rangle$ is regular for all $a\in A$. For many different sets $A$, this problem has been considered in    \cite{abc2,abc3,ac,AMS,lmm,lm,levi96,mcalister}, among others. 
\item Classify the permutation groups $G\le \sym$ such that  for all $a\in\trans\setminus \sym$ the equality $\langle G,a\rangle\setminus G=\langle \sym,a\rangle\setminus \sym$ holds. This problem was solved in \cite{AAC}.
\item Classify the permutation groups $G\le \sym$ such that for all $a\in \trans\setminus \sym$ we have $\langle G,a\rangle \setminus \sym = \langle g^{-1}ag\mid g\in G\rangle$. This classification \cite{acmn} answered an old problem.
\end{enumerate}

We saw above the importance of strongly factorizable semigroups and the multitude of contexts in which they appear. The first goal of this paper is to continue the trend described above and classify the permutation groups that together with any transformation of a given rank $k$ generate a strongly factorizable semigroup. 

The second part of the paper deals with the following problem proposed by the third author. Observe that in the founding paper of factorizable semigroups \cite{tolo} the goal was to check when some given properties of $A$ and $B$ carry to the factorizable oversemigroup $S:=AB$. Here we go the converse direction: given $T=SG$, where $S\le \trans$ and $G$ is the normalizer of $S$ in $\sym$, find semigroup properties that carry from $SG$ to $S$. This looks  a sensible question since in $SG$ we can take advantage of the group theory machinery and hence checking a property might be easier in $SG$ than in $S$.

The main result of this part of the paper says that {\em regularity} carries from $SG$ to $S$.  

We now summarise the contents of the paper.
In Section \ref{two}  we classify permutation groups with what we call the {\em ordered $k$-ut property}. This is the cornerstone of our classification results. Section \ref{three} connects the group theory results of the previous section with the theorems on factorizable semigroups.    Section \ref{four} deals with the semigroups $SG$, where $S\le \trans$ and $G$ is its normalizer in $\sym$. The paper finishes with a list of open problems. 

\section{The ordered $k$-ut property}\label{two}

A permutation group $G$ on $\Omega$ is said to have the \emph{$k$-universal
transversal property} (or $k$-ut for short) if, given any $k$-subset $A$
and $k$-partition $P$ of $\Omega$, there exists $g\in G$ such that $Ag$ is a
transversal to $P$. These groups were studied in connection with permutation
groups $G$ such that $\langle G,a\rangle$ is regular for all maps $a$ of
rank $k$. The groups satisfying the $k$-ut property for $3\le k\le n/2$ were partly classified in\cite{ac} (small corrections to the case of $3$-ut where made in \cite{abc}).

A permutation group $G$ on $\Omega$ is said to have the \emph{ordered $k$-ut
property} if, given any ordered $k$-subset $A=(a_1,\ldots,a_k)$ and ordered
$k$-partition $\pi=(P_1,\ldots,P_k)$ of $\Omega$, there exists $g\in G$ such
that $a_ig\in P_i$ for $i=1,\ldots,k$. 

Our goal is to classify the groups possessing ordered $k$-ut. 
Clearly, ordered $k$-ut implies the usual $k$-ut, so we only need look among
permutation groups with $k$-ut.

\subsection{Permutation group properties}

A permutation group is \emph{$k$-primitive} if it is $k$-transitive and the
stabiliser of $k-1$ points is primitive on the remaining points.

A permutation group is \emph{generously $k$-transitive} if, given any 
$(k+1)$-set $M\subseteq\Omega$, the group induced on $M$ by its setwise
stabiliser is the symmetric group of degree $k+1$. It is straightforward to
prove that a generously $k$-transitive group is indeed $k$-transitive.

The next result summarises the relationship between these concepts and
the ordered $k$-ut property.

\begin{prop}\label{p:basicprop}
\begin{enumerate}
\item
A $k$-transitive group has the ordered $k$-ut property.
\item
For $k\ge 2$, ordered $k$-ut implies ordered $(k-1)$-ut.
\item 
A permutation group $G$ which has $k$-ut and is generously $(k-1)$-transitive
has ordered $k$-ut.
\item
A permutation group $G$ with the ordered $k$-ut property is $(k-1)$-transitive.
\item
A permutation group $G$ with the ordered $k$-ut property is $(k-1)$-primitive.\label{list:primitive}
\item
For $k\ge 2$, if $G$ has ordered $k$-ut, its point stabiliser has ordered $(k-1)$-ut.
\end{enumerate}
\end{prop}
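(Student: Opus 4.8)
\section*{Proof proposal for Proposition \ref{p:basicprop}}

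The plan is to handle the six parts in a logical order, deriving the later ones from the earlier ones where it helps. Every argument uses the same two elementary tricks: to take advantage of the ordered $k$-ut property one manufactures a convenient ordered $k$-subset and ordered $k$-partition, and to build a $k$-partition out of fewer data one either adjoins a singleton part or splits an oversized part. Recall that throughout $n\ge k$.

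For (a), given an ordered $k$-subset $(a_1,\dots,a_k)$ and an ordered $k$-partition $(P_1,\dots,P_k)$, choose any $b_i\in P_i$; the $b_i$ are distinct, so $k$-transitivity supplies $g$ with $a_ig=b_i\in P_i$. For (f), let $\alpha\in\Omega$, put $H=G_\alpha$, and take an ordered $(k-1)$-subset $(a_1,\dots,a_{k-1})$ and an ordered $(k-1)$-partition $(P_1,\dots,P_{k-1})$ of $\Omega\setminus\{\alpha\}$; applying ordered $k$-ut in $\Omega$ to the ordered $k$-subset $(a_1,\dots,a_{k-1},\alpha)$ and the ordered $k$-partition $(P_1,\dots,P_{k-1},\{\alpha\})$ yields $g$ with $\alpha g\in\{\alpha\}$, so $g\in H$, and $a_ig\in P_i$ for $i\le k-1$. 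Part (d) is obtained the same way: to send an ordered $(k-1)$-tuple $(a_1,\dots,a_{k-1})$ of distinct points to another such tuple $(b_1,\dots,b_{k-1})$, extend the first by any point $a_k\notin\{a_1,\dots,a_{k-1}\}$ (possible as $n\ge k$) and apply ordered $k$-ut to it against the ordered $k$-partition $(\{b_1\},\dots,\{b_{k-1}\},\Omega\setminus\{b_1,\dots,b_{k-1}\})$, whose last part is nonempty since $n\ge k$; the resulting $g$ satisfies $a_ig=b_i$ for $i\le k-1$, so $G$ is $(k-1)$-transitive. For (b), given an ordered $(k-1)$-subset $(a_1,\dots,a_{k-1})$ and ordered $(k-1)$-partition $(P_1,\dots,P_{k-1})$, note that some part $P_m$ has at least two points (else $n=k-1$); split $P_m=P_m'\sqcup P_m''$, pick $a_k\notin\{a_1,\dots,a_{k-1}\}$, and apply ordered $k$-ut to $(a_1,\dots,a_{k-1},a_k)$ against $(P_1,\dots,P_{m-1},P_m',P_{m+1},\dots,P_{k-1},P_m'')$; then $a_ig\in P_i$ for all $i\le k-1$.

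Part (c) is the one genuinely new construction. Given an ordered $k$-subset $A=(a_1,\dots,a_k)$ and an ordered $k$-partition $(P_1,\dots,P_k)$, use $k$-ut to find $h\in G$ with $Ah$ a transversal of the partition, so there is a permutation $\sigma$ of $\{1,\dots,k\}$ with $a_ih\in P_{\sigma(i)}$. Let $M=\{a_1h,\dots,a_kh\}$, a $k$-set. Since $G$ is generously $(k-1)$-transitive, the setwise stabiliser of $M$ induces the full symmetric group on $M$, so there is $g'\in G$ stabilising $M$ and inducing the permutation of $M$ that sends $a_ih$ to $a_{\sigma^{-1}(i)}h$ (a well-defined bijection of $M$ since $\sigma^{-1}$ is a permutation of $\{1,\dots,k\}$). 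Then $a_i(hg')=a_{\sigma^{-1}(i)}h\in P_{\sigma(\sigma^{-1}(i))}=P_i$, so $g=hg'$ witnesses ordered $k$-ut.

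Finally (e). By (f) applied $k-2$ times, the pointwise stabiliser $G_{\alpha_1,\dots,\alpha_{k-2}}$ has ordered $2$-ut on the remaining $n-(k-2)$ points, and by (d) (with $k=2$) it is transitive there; so it suffices to prove that a transitive group $H$ on a set $\Delta$ with ordered $2$-ut is primitive. If $B$ were a nontrivial block, choose distinct $a_1,a_2\in B$ and apply ordered $2$-ut against the partition $\{B,\Delta\setminus B\}$: any $g\in H$ maps $\{a_1,a_2\}$ into the single block $Bg$, hence cannot satisfy both $a_1g\in B$ and $a_2g\notin B$, a contradiction. Thus $G_{\alpha_1,\dots,\alpha_{k-2}}$ is primitive on the remaining points, and together with the $(k-1)$-transitivity from (d) this says $G$ is $(k-1)$-primitive. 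The main obstacle is part (c): it is the only place where $k$-ut by itself is insufficient, and one must use generous transitivity to re-sort a transversal into the prescribed order while keeping the bookkeeping with $\sigma$ straight; the other five parts are routine applications of the ``adjoin a singleton part'' and ``split an oversized part'' tricks.
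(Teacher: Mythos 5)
Your proof is correct and follows essentially the same route as the paper: (a), (b), (f) by the obvious singleton/split-a-part constructions, (c) by re-sorting a transversal using generous $(k-1)$-transitivity (you stabilise the image set $Ah$ where the paper stabilises $A$, an immaterial difference), and (d) by extending the tuple against the partition into singletons plus the rest. Your (e) is only superficially different: iterating (f) down to ordered $2$-ut and then running the block argument unrolls to exactly the paper's partition $\{a_1\},\dots,\{a_{k-2}\},B,\text{rest}$, so it is the same idea in a slightly different packaging.
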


\begin{proof}
(a), (b) and (f) are straightforward.

(c) Given a $k$-set $A$ and a $k$-partition $\pi$, there is an element of $G$ mapping $A$ to a
transversal of $\pi$; premultiplying this element by an element in the
setwise stabiliser of $A$ shows that we can map elements of $A$ to parts
of $\pi$ in any order.

(d) Let $(a_1,\ldots,a_{k-1})$
and $(b_1,\ldots,b_{k-1})$ be two ordered $(k-1)$-tuples of distinct points
of $\Omega$. If $x$ is any point different from $a_1,\ldots,a_{k-1}$, then
a permutation mapping $a_1,\ldots,a_{k-1},x$ to a transversal of the partition
$\{b_1\},\ldots,$ $\{b_{k-1}\},\Omega\setminus\{b_1,\ldots,b_{k-1}\}$ maps the
first $(k-1)$-tuple to the second.

(e) Suppose that $G$ is not $(k-1)$-primitive; let $B$ be a
non-trivial block of imprimitivity for the stabiliser of distinct $a_1,\ldots,a_{k-2}\in \Omega$.
Let $A$ be a subset consisting of $a_1,\ldots,a_{k-2}$ and two points $b_1,b_2$ of $B$,
and $P$ the partition into $\{a_1\}$, \dots, $\{a_{k-2}\}$, $B$, and the
rest of $\Omega$. Any permutation mapping $a_i$ to $a_i$ for
$i=1,\ldots,k-2$, maps $b_1,b_2$ either both into $B$ or outside of $B$. Hence $G$ does not have the ordered $k$-ut property.\qed
\end{proof}

\begin{prop}\label{p:ord2ut}
A permutation group $G$ has the ordered $2$-ut property if and only if it is
primitive.
\end{prop}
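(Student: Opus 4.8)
The plan is to prove the two implications of Proposition~\ref{p:ord2ut} separately. The implication ``ordered $2$-ut $\Rightarrow$ primitive'' is essentially already in hand: by Proposition~\ref{p:basicprop}(d) the group is transitive, and Proposition~\ref{p:basicprop}(e) in the case $k=2$ -- where the stabiliser of $0$ points is $G$ itself -- says that $G$ admits no non-trivial block of imprimitivity; together these give primitivity. So the real content is the converse, and I would deduce it from a statement about orbital digraphs.

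Assume $G$ is primitive, hence transitive. Fix distinct $a_1,a_2\in\Omega$ and let $\vec\Gamma$ be the orbital digraph on $\Omega$ with arc set $\{(a_1g,a_2g):g\in G\}$. Unwinding definitions, the ordered $2$-ut requirement for the pair $(a_1,a_2)$ and an ordered partition $(P_1,P_2)$ asserts exactly that $\vec\Gamma$ has an arc with tail in $P_1$ and head in $P_2$; asking this for every ordered partition into two non-empty parts is precisely the statement that $\vec\Gamma$ is strongly connected (one way: follow a directed path from a vertex of $P_1$ to a vertex of $P_2$; the other way: the set of vertices reachable from a fixed vertex is closed under out-arcs, so if $\vec\Gamma$ is not strongly connected some such set gives a partition with no arc from the first part to the second). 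Hence it suffices to show $\vec\Gamma$ is strongly connected, and the key point is a counting argument. Since $G$ is transitive, $\vec\Gamma$ is vertex-transitive and so regular, with common in-degree and out-degree $d$ (equality by counting arcs two ways), and $d\ge 1$ because $a_1\ne a_2$. Suppose $\vec\Gamma$ is not strongly connected and let $C$ be the set of vertices reachable from a fixed vertex, a non-empty proper subset closed under out-arcs. The $d|C|$ out-arcs issuing from vertices of $C$ all land inside $C$; but the vertices of $C$ receive only $d|C|$ arcs altogether, so every arc with head in $C$ also has its tail in $C$. Thus $\vec\Gamma$ has no arc joining $C$ to its complement in either direction, so its underlying undirected graph is disconnected. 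That graph is invariant under $G$ and contains the edge $\{a_1,a_2\}$, so its connected components form a non-trivial $G$-invariant partition of $\Omega$, contradicting primitivity; therefore $\vec\Gamma$ is strongly connected, as required.

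The only genuine obstacle is the counting step upgrading ``weakly connected'' to ``strongly connected'' for the vertex-transitive digraph $\vec\Gamma$; the rest is bookkeeping, once one sees that ordered $2$-ut is the condition that every non-diagonal orbital digraph of $G$ be strongly connected, while primitivity (via the standard observation that the connected components of a $G$-invariant graph are blocks) is the condition that each such digraph be weakly connected. A quick look should be given to the degenerate small cases -- for instance $n=2$, where $\vec\Gamma$ is strongly connected outright and the ``suppose not'' branch is never entered -- but these cause no difficulty.
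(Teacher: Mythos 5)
Your proof is correct and takes essentially the same route as the paper: the forward direction via Proposition~\ref{p:basicprop}, and the converse by reducing ordered $2$-ut for a pair $(a_1,a_2)$ to strong connectivity of the corresponding orbital digraph and then extracting an arc from $P_1$ to $P_2$ whose preimage under $G$ gives the witness. The only difference is that you prove from scratch (via the in/out-degree counting argument and the observation that components of a $G$-invariant graph are blocks) the two standard facts the paper simply invokes, namely that primitivity forces orbital digraphs to be connected and that connectedness plus vertex-transitivity upgrades to strong connectivity.
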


\begin{proof} Ordered $2$-ut implies primitivity, by (\ref{list:primitive}) above.
Conversely, suppose $G$ is primitive. Then all
orbital digraphs for $G$ are connected, and hence (since $G$ is transitive)
strongly connected. Now let $A=\{a_1,a_2\}$ be a $2$-set and
$\pi=\{P_1,P_2\}$ a $2$-partition. Since the orbital graph with edge set
$(a_1,a_2)^G$ is strongly connected, there is an edge with initial vertex
in $P_1$ and terminal vertex in $P_2$; the element of $G$ mapping $(a_1,a_2)$
to this edge witnesses ordered $k$-ut.
\qed\end{proof}

The next proposition gives sufficient conditions for generous $k$-transitivity.

\begin{prop}
\begin{enumerate}
\item
Suppose that $G$ is $k$-transitive, and every orbital of the $(k-1)$-point
stabiliser is self-paired. Then $G$ is generously $k$-transitive.
\item
Suppose that $G$ is $k$-transitive, and the non-trivial orbits of the
stabiliser of $k$ points all have different sizes. Then $G$ is generously
$k$-transitive.
\end{enumerate}
\end{prop}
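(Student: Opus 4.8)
The plan is to derive both statements from the description of generous $k$-transitivity in terms of self-paired orbitals, together with the standard orbit--orbital correspondence. One preliminary observation makes the hypotheses usable: since $G$ is $k$-transitive, the pointwise stabilisers of any fixed number $\le k$ of points form a single conjugacy class, so "every orbital of the $(k-1)$-point stabiliser is self-paired" and "the non-trivial orbits of the $k$-point stabiliser all have different sizes" are genuinely base-point-independent conditions, and it suffices to check the relevant property for one convenient choice of base points.

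For part (a), I would proceed as follows. Let $M\subseteq\Omega$ with $|M|=k+1$, and write $G_{\{M\}}$ for its setwise stabiliser. Since $\Sym(M)\cong S_{k+1}$ is generated by transpositions, it is enough to produce, for every $2$-subset $\{u,v\}\subseteq M$, an element of $G_{\{M\}}$ inducing the transposition $(u\,v)$ on $M$, that is, interchanging $u$ and $v$ while fixing the other $k-1$ points of $M$ pointwise. Let $H$ be the pointwise stabiliser of those $k-1$ points; by $k$-transitivity $H$ is transitive on the remaining $n-k+1$ points, and by hypothesis (legitimately applied to this particular $(k-1)$-set, by the conjugacy remark) every orbital of $H$ is self-paired. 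In particular the orbital of $H$ containing $(u,v)$ is self-paired, so there is $h\in H$ with $(u,v)h=(v,u)$. Then $h$ fixes the chosen $k-1$ points, swaps $u$ and $v$, hence lies in $G_{\{M\}}$ and induces $(u\,v)$ on $M$. As $\{u,v\}$ was arbitrary, $G_{\{M\}}$ induces all of $\Sym(M)$, so $G$ is generously $k$-transitive.

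For part (b), I would reduce to part (a) by showing the hypothesis forces every orbital of the $(k-1)$-point stabiliser $H$ to be self-paired. Fix points $x_1,\dots,x_{k-1}$, so $H$ is transitive on $\Delta:=\Omega\setminus\{x_1,\dots,x_{k-1}\}$; pick $x_k\in\Delta$, so that $H_{x_k}$ is the stabiliser of the $k$ points $x_1,\dots,x_k$. Under the standard bijection between orbitals $\Gamma$ of $H$ on $\Delta$ and orbits of $H_{x_k}$ on $\Delta$ (sending $\Gamma$ to $\{y:(x_k,y)\in\Gamma\}$), the non-trivial orbitals correspond precisely to the non-trivial orbits of the $k$-point stabiliser, and the size of such an orbit equals the subdegree $d(\Gamma)$. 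The key point is that an orbital and its paired orbital always have equal subdegree: if $(x,y)\in\Gamma$ then the stabiliser of $(x,y)$ in $H$ coincides with that of $(y,x)$, so $|\Gamma|=|\Gamma^{*}|$, and dividing by $|\Delta|$ gives $d(\Gamma)=d(\Gamma^{*})$; moreover $\Gamma$ non-trivial forces $\Gamma^{*}$ non-trivial. Hence, if the non-trivial orbits of the $k$-point stabiliser have pairwise distinct sizes, then for each non-trivial orbital $\Gamma$ the orbitals $\Gamma$ and $\Gamma^{*}$ correspond to orbits of the same size, hence to the same orbit, so $\Gamma=\Gamma^{*}$. Together with self-pairing of the diagonal, every orbital of $H$ is self-paired, and part (a) gives the conclusion.

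I do not anticipate a serious obstacle; the only things needing care are verifying that the conjugacy of point stabilisers really renders the hypotheses of (a) and (b) base-point-independent, and stating the orbit--orbital correspondence and the equality $|\Gamma|=|\Gamma^{*}|$ precisely enough that the "distinct subdegrees" hypothesis does exactly the work required. With those in place, both arguments are short.
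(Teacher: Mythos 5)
Your proof is correct and follows essentially the same route as the paper: in (a) you use self-paired orbitals of the $(k-1)$-point stabiliser to realise each transposition on the $(k+1)$-set and note that transpositions generate $\Sym(M)$, and in (b) you reduce to (a) via the fact that paired orbits of the $k$-point stabiliser have equal sizes. The only difference is that you spell out the orbit--orbital correspondence and the base-point-independence explicitly, where the paper states these steps tersely.
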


\begin{proof}
(a)
Take any $k+1$ points $a_1,\ldots,a_{k+1}$. By assumption, $G$ has an
element fixing $a_1,\ldots,a_{k-1}$ and interchanging $a_k$ with $a_{k+1}$.
Since the numbering of the points is arbitrary, the setwise stabiliser of the
set of $k+1$ points induces every possible transposition on it. The
transpositions generate the symmetric group.

(b)
This follows immediately from (a), since paired orbits have the same sizes.
\qed\end{proof}

\subsection{The classification of the groups with the ordered $k$-ut property}

Trivially, $G \le S_n$ has the ordered $n$-ut property if and only if $G=S_n$.
Any permutation group has the ordered $1$-ut property and by Proposition \ref{p:ord2ut}, ordered $2$-ut is equivalent to primitivity. 

Ordered $k$-ut clearly implies $k$-ut, and hence (by
Proposition~\ref{p:basicprop}(b)), $k'$-ut for all $1\le k'\le k$. 
Hence it remains to consider the groups arising in the classification of
groups with $k$-ut from \cite{abc,ac} (given below), except that 
these results only classify the values of $n$ with $ \lfloor \frac{n+1}{2}\rfloor \ge k$. Below, we will deal with smaller values of $n$ by ad-hoc arguments. 

\begin{prop} \label{p:ord6ut} Let $n\ge 6$, $G\le S_n$, then $G$ has the ordered $k$-ut property  for some $6\le k \le n$, if and only if  $G=S_n$ or $G=A_n$ (with $n\ne k$) in their natural action on $n$ points.
\end{prop}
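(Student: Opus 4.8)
The plan is to combine the hierarchy established in Proposition~\ref{p:basicprop} with the (known) classification of $k$-transitive and $k$-homogeneous finite permutation groups. First I would observe that by Proposition~\ref{p:basicprop}(d), a group with the ordered $k$-ut property is $(k-1)$-transitive, and by part (e) it is in fact $(k-1)$-primitive. Since $k\ge 6$, this forces $G$ to be at least $5$-transitive. By the classification of finite $t$-transitive groups for $t\ge 4$ (a consequence of CFSG: the only such groups are the symmetric groups $S_n$, the alternating groups $A_n$, and the Mathieu groups $M_{11},M_{12},M_{23},M_{24}$, of which $M_{12}$ and $M_{24}$ are $5$-transitive, $M_{11}$ and $M_{23}$ are $4$-transitive), the candidates for a group with ordered $k$-ut with $6\le k\le n$ are: $S_n$, $A_n$, and the two $5$-transitive Mathieu groups when $k\le 6$.

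Next I would dispose of $S_n$ and $A_n$ in the positive direction. The case $G=S_n$ is trivial: $S_n$ is $n$-transitive, hence $k$-transitive for every $k\le n$, so by Proposition~\ref{p:basicprop}(a) it has ordered $k$-ut for all $k\le n$. For $G=A_n$ with $k<n$: $A_n$ is $(n-2)$-transitive, so it is $k$-transitive whenever $k\le n-2$, giving ordered $k$-ut by part~(a); and for $k=n-1$ one checks directly (or notes that $A_n$ is generously $(n-2)$-transitive and has $(n-1)$-ut, applying Proposition~\ref{p:basicprop}(c)) that ordered $(n-1)$-ut holds. Conversely, $A_n$ does \emph{not} have ordered $n$-ut, since only $S_n$ does; this is exactly the reason for the restriction $n\ne k$ in the statement.

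The remaining work — and the only genuinely delicate point — is to rule out the Mathieu groups $M_{12}$ (degree $12$) and $M_{24}$ (degree $24$) for $k=6$. Each is sharply $5$-transitive, so by Proposition~\ref{p:basicprop}(e) a putative ordered $6$-ut group must be $5$-primitive, i.e.\ the stabiliser of $5$ points must act primitively on the remaining points; but in $M_{12}$ the $5$-point stabiliser is trivial and acts on $7$ remaining points, which is not a primitive action (a trivial group on more than one point is imprimitive), so $M_{12}$ is not $5$-primitive and cannot have ordered $6$-ut. The same argument applies verbatim to $M_{24}$, whose $5$-point stabiliser is trivial acting on $19$ points. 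Hence no Mathieu group survives, and since there is nothing with $k\ge 7$ beyond $S_n$ and $A_n$, the list is exactly $S_n$ and $A_n$ (with $n\ne k$), as claimed. The main obstacle is simply making sure the exceptional small-degree multiply transitive groups are correctly enumerated and eliminated; the $5$-primitivity criterion of Proposition~\ref{p:basicprop}(e) does this cleanly.
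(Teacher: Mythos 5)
Your overall strategy (reduce to $5$-transitivity via Proposition~\ref{p:basicprop}(d), then eliminate the Mathieu groups) is a legitimate alternative to the paper, which instead quotes the classification of groups with the $6$-ut property (\cite[Theorem 1.4]{ac}) for $n\ge 11$ and uses $(k-1)$-transitivity only for $n\le 10$. However, your elimination of $M_{12}$ and $M_{24}$ contains genuine errors. First, you unpack ``$5$-primitive'' incorrectly: with the paper's definition, Proposition~\ref{p:basicprop}(e) applied with $k=6$ says that the stabiliser of \emph{four} points must be primitive on the remaining points, not the stabiliser of five. The implication you actually invoke --- that ordered $6$-ut forces the $5$-point stabiliser to act primitively --- is false: $A_7$ has ordered $6$-ut (it is part of the classification, with $k=6=n-1$), yet its $5$-point stabiliser is trivial on the remaining two points, hence not primitive. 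So your criterion would wrongly exclude groups that belong to the answer, and it is not what Proposition~\ref{p:basicprop}(e) provides. Second, the factual claim about $M_{24}$ is wrong: $M_{24}$ is $5$-transitive but not sharply $5$-transitive; its $5$-point stabiliser has order $48$, and its $4$-point stabiliser has order $960$ acting on $20$ points, so ``the same argument applies verbatim'' does not apply at all.

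With the correct notion the eliminations can be repaired, but they need real arguments: for $M_{12}$, the $4$-point stabiliser is regular of composite order $8$ on the remaining $8$ points, hence imprimitive, so $M_{12}$ is not $5$-primitive; for $M_{24}$, the $4$-point stabiliser is transitive on the remaining $20$ points, and one must exhibit a block system --- for instance, the five octads of the Steiner system $S(5,8,24)$ through the four fixed points partition the remaining $20$ points into five blocks of size $4$ preserved by this stabiliser --- to conclude imprimitivity. Alternatively (the paper's route), note that ordered $k$-ut implies ordered $6$-ut, hence $6$-ut, and by the cited classification the only groups of degree $n\ge 11$ with $6$-ut are $A_n$ and $S_n$; this disposes of both Mathieu groups with no stabiliser computation. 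A smaller slip: $A_n$ is not generously $(n-2)$-transitive (the setwise stabiliser of an $(n-1)$-set induces only $A_{n-1}$ on it), so your parenthetical appeal to Proposition~\ref{p:basicprop}(c) for $k=n-1$ fails; what is needed is the direct check that $A_n$ has ordered $(n-1)$-ut, namely that of the two permutations completing the required assignment exactly one is even.
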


\begin{proof} By \cite[Theorem 1.4]{ac}, for $n\ge 11$, the only groups with $6$-ut are $A_n$ and $S_n$, hence no other group has ordered~$6$-ut, and so it does not have ordered $k$-ut either. 
For  $n\le 10$, the listed groups are the only ones that are $(k-1)$-transitive.

Conversely, it is easy to check that the listed values of  $A_n$ and $S_n$ have the ordered~$k$-ut property, for $6 \le k\le n$.
\qed\end{proof}

\begin{prop} \label{p:ord5ut}Let $n\ge 5$, $G\le S_n$, then $G$ has the ordered $5$-ut property if and only if it is $5$-transitive or $A_6$ ($n=6$).
\end{prop}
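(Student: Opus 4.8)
The plan is to prove the two implications separately, using Proposition~\ref{p:basicprop} to cut the forward direction down to a short list supplied by the classification of finite simple groups.

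\emph{Sufficiency.} If $G$ is $5$-transitive then it has ordered $5$-ut by Proposition~\ref{p:basicprop}(a), so only $G=A_6$ on $6$ points needs a separate argument. A $5$-partition $\pi=(P_1,\dots,P_5)$ of a $6$-set has exactly one part of size $2$, say $P_j=\{x,y\}$, and four singleton parts. Given an ordered $5$-subset $(a_1,\dots,a_5)$, the $4$-transitivity of $S_6$ provides $h\in S_6$ carrying $a_i$ to the point of $P_i$ for the four indices $i\neq j$; then $h$ necessarily sends $a_j$ into $\{x,y\}$, since that is the complementary $2$-set. If $h$ is odd, replace it by $h(x\ y)$: this fixes every singleton part, maps $P_j$ to itself, and is even, so it is an element of $A_6$ witnessing ordered $5$-ut. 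Hence $A_6$ has the property.

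\emph{Necessity.} Assume $G$ has ordered $5$-ut. By Proposition~\ref{p:basicprop}(d) it is $4$-transitive, so by the classification of finite simple groups $G$ is one of $S_n$ ($n\ge5$), $A_n$ ($n\ge6$), $M_{11}$ acting on $11$ points, $M_{12}$, $M_{23}$, or $M_{24}$. Of these, $S_n$ ($n\ge5$), $A_n$ ($n\ge7$), $M_{12}$ and $M_{24}$ are already $5$-transitive, and $A_6$ is covered by the sufficiency part; the only remaining candidates are $M_{11}$ on $11$ points and $M_{23}$ on $23$ points. Here I invoke Proposition~\ref{p:basicprop}(e): ordered $5$-ut forces $G$ to be $4$-primitive, i.e.\ the stabiliser of $3$ points acts primitively on the remaining points. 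But $M_{11}$ is sharply $4$-transitive on $11$ points, so its $3$-point stabiliser acts regularly on the other $8$ points, and a group of order $8$ has a nontrivial proper subgroup, so that regular action is imprimitive. Likewise, the $3$-point stabiliser of $M_{23}$ is a point stabiliser of $\mathrm{PSL}(3,4)=M_{21}$ in its action on $\mathrm{PG}(2,4)$, acting on the $20$ points other than the fixed point $P$; the five lines through $P$ cut these $20$ points into blocks of size $4$ that every collineation fixing $P$ permutes, so this action is imprimitive too. In both cases $G$ is not $4$-primitive, a contradiction, and these groups do not occur.

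The one genuinely delicate point is the Mathieu-group step, where one must identify the $3$-point stabilisers and an invariant block system on each domain (the regular action for $M_{11}$; for $M_{23}$, the partition of the $20$ points of $\mathrm{PG}(2,4)\setminus\{P\}$ into the lines through $P$) in order to see the imprimitivity. The degenerate small degrees $n=5,6,7,8$ need no extra work: they are already absorbed into the list of $4$-transitive groups, whose only non-$5$-transitive member in that range is $A_6$.
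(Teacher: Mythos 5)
Your proof is correct, but it takes a genuinely different route from the paper. The paper works inside its general framework: since ordered $5$-ut implies $5$-ut, it quotes the classification of $5$-ut groups from \cite[Theorem 1.5]{ac} for $n\ge 11$ (these are the $5$-homogeneous groups, which for such degrees are $5$-transitive, together with $\mathrm{P}\Gamma\mathrm{L}(2,32)$, which is discarded because it is not even $4$-transitive, via Proposition~\ref{p:basicprop}(d)), and then disposes of $n\le 10$ by noting that $A_6$ is the only $4$- but not $5$-transitive group of those degrees. You instead bypass the $5$-ut classification entirely: from Proposition~\ref{p:basicprop}(d) you get $4$-transitivity, invoke the CFSG list of $4$-transitive groups ($S_n$, $A_n$, $M_{11}$, $M_{12}$, $M_{23}$, $M_{24}$), and then eliminate the two non-$5$-transitive Mathieu candidates by Proposition~\ref{p:basicprop}(e): $M_{11}$ fails $4$-primitivity because its $3$-point stabiliser of order $8$ acts regularly (hence imprimitively) on $8$ points, and $M_{23}$ fails because its $3$-point stabiliser is a point stabiliser of $\mathrm{PSL}(3,4)$ on $\mathrm{PG}(2,4)$, preserving the block system of size-$4$ blocks cut out by the five lines through the fixed point; both identifications are standard and correct. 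Your approach buys independence from \cite[Theorem 1.5]{ac} and treats all degrees $n\ge5$ uniformly, with no separate small-degree argument, and your explicit parity-adjustment verification that $A_6$ has ordered $5$-ut fills in what the paper dismisses as clear; the paper's approach buys brevity, since once the $k$-ut classification is on the table the only exceptional group to exclude is $\mathrm{P}\Gamma\mathrm{L}(2,32)$ and the Mathieu groups never arise. Both arguments ultimately rest on the classification of finite simple groups.
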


\begin{proof}
For $n\ge 11$, a group with $5$-ut is $5$-homogeneous or
$\mathrm{P}\Gamma\mathrm{L}(2,32)$ (with degree $33$) \cite[Theorem 1.5]{ac}. The $5$-homogeneous groups (with $n\ge 10$) are $5$-transitive
and have ordered $5$-ut, while $\mathrm{P}\Gamma\mathrm{L}(2,32)$  is not $4$-transitive
so does not have ordered $5$-ut.

For $n\le 10$, $A_6$, which clearly satisfies ordered~$5$-ut, is the only
group that is $4$-transitive, but not $5$-transitive. 
\qed\end{proof}

\begin{prop} \label{p:ord4ut} Let $n\ge 4$, $G\le S_n$, then $G$ has the ordered $4$-ut property if it is $4$-transitive, $A_5$ ($n=5$), or $M_{11}$ ($n=12$). If there are any other groups with ordered $4$-ut, they 
contain $\mathrm{PSL}(2,2^p)$ ($n=2^p+1$), where $2^p-1$ is a (Mersenne) prime.
\end{prop}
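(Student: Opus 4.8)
The plan is to prove the two halves of Proposition~\ref{p:ord4ut} separately. That a $4$-transitive group has ordered $4$-ut is Proposition~\ref{p:basicprop}(a). For $\alt$ on $5$ points one argues by hand: an ordered $4$-partition $(P_1,\dots,P_4)$ of a $5$-set has one block of size $2$ and three singletons, so the requirement $a_ig\in P_i$ pins the images of three of the four points and forces the remaining two points of $\Omega$ to be mapped, in one of exactly two ways, onto the size-$2$ block; exactly one of these two permutations is even, and it witnesses ordered $4$-ut. The case of $M_{11}$ on $12$ points is the delicate one, since this action is only $3$-transitive, so none of the soft sufficient conditions of Proposition~\ref{p:basicprop} obviously applies; instead one uses that $M_{11}$ has $4$-ut (it occurs in the classification of \cite{ac}), pins three of the four points by $3$-transitivity, and then verifies, running through the orbits of the order-$6$ stabiliser of three points on the remaining $9$ points against the possible shapes of the induced partition, that the fourth point can always be placed as required. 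This is a finite check, which may be carried out by hand or by computer.

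For the converse, suppose $G$ has ordered $4$-ut and is none of the listed groups. By Proposition~\ref{p:basicprop}(d) and (e), $G$ is $3$-transitive and $3$-primitive, and it also has $4$-ut. We now run through the groups with $4$-ut, using the classification of \cite{ac} for the degrees it covers, a direct enumeration of the $3$-transitive groups for the finitely many small remaining degrees, and Kantor's description of the $4$-homogeneous groups. If $G$ is $4$-homogeneous, it is either $4$-transitive (excluded) or, for degree at least $8$, one of $\psl(2,8)$ and $\pgaml(2,8)$ of degree $9$ and $\pgaml(2,32)$ of degree $33$, each of which contains $\psl(2,2^p)$ with $2^p-1$ a Mersenne prime; the only smaller $4$-homogeneous non-$4$-transitive group that is $3$-transitive is $\pgl(2,5)$ of degree $6$, whose two-point stabiliser is cyclic and regular of composite order on the remaining four points, hence imprimitive, so it is not $3$-primitive. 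If $G$ is not $4$-homogeneous, the classification leaves only a short list; its entries that are not $3$-transitive, and those that are $3$-transitive but not $3$-primitive — recognized because a two-point stabiliser preserves a nontrivial block system on its non-fixed points, as for $\agl(d,2)$ and $M_{22}$ — are discarded, leaving only $4$-transitive groups and $M_{11}$ on $12$ points. Finally, for the surviving families $\psl(2,q)\le G\le\pgaml(2,q)$ acting on $q+1$ points, $3$-transitivity forces $\pgl(2,q)\le G$, so a two-point stabiliser contains $\mathbb{F}_q^{*}$, cyclic of order $q-1$, as a normal subgroup acting regularly on the remaining $q-1$ points; since every subgroup of a cyclic group is characteristic, any proper nontrivial divisor of $q-1$ yields a block system, so $3$-primitivity forces $q-1$ to be prime. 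As $q$ is a prime power this leaves $q=3$ (whence $\sym$ on $4$ points, which is $4$-transitive) or $q=2^p$ with $2^p-1$ a Mersenne prime (so $p$ is prime), and in the latter case $\psl(2,2^p)=\pgl(2,2^p)\le G$, as claimed.

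The main obstacle is twofold. The whole argument rests on the classification of $4$-ut groups from \cite{ac}, so one must have that list in hand and deal carefully with its small-degree and sporadic members. The genuinely subtle point, however, is that $M_{11}$ on $12$ points does have ordered $4$-ut even though the action is only $3$-transitive and so falls under none of the easy sufficient conditions; establishing this requires the explicit finite analysis sketched above rather than an appeal to a general principle.
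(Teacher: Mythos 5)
Your converse argument is essentially the paper's: ordered $4$-ut forces $3$-transitivity, $3$-primitivity and $4$-ut (Proposition~\ref{p:basicprop}(d),(e)), and one then runs through the $4$-ut classification of \cite{ac} (plus the small degrees $4\le n\le 7$ by hand), discarding $\pgl(2,5)$ and the groups with socle $\psl(2,q)$ for $q\ge 5$ prime or $q=2^p$ with $2^p-1$ composite, because the two-point stabiliser has a regular normal cyclic subgroup of composite order and is therefore imprimitive; what survives is exactly the $4$-transitive groups, $M_{11}$, and groups containing $\psl(2,2^p)$ with $2^p-1$ a Mersenne prime. Your explicit check for $A_5$ is fine (the paper simply calls it clear). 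Two minor cautions: the blanket claim that $3$-transitivity forces $\pgl(2,q)\le G$ is false in general (e.g.\ $M_{10}\le\pgaml(2,9)$), though harmless here since $q=9$ does not survive the classification; and $\agl(d,2)$ and $M_{22}$ do not occur in the $4$-ut list, so the aside about discarding them is moot.

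The genuine gap is your treatment of $M_{11}$ on $12$ points. You assert that none of the soft sufficient conditions applies and substitute a finite verification that you do not actually carry out, only declare feasible ``by hand or by computer''; as written, the key step of the sufficiency half is therefore missing. Moreover the assertion is wrong: the soft route does apply, and it is precisely the paper's argument. The $3$-point stabiliser of $M_{11}$ has orbits of sizes $3$ and $6$ on the remaining nine points; since these sizes are distinct, the proposition giving sufficient conditions for generous transitivity shows that $M_{11}$ is generously $3$-transitive, and then Proposition~\ref{p:basicprop}(c), combined with the fact that $M_{11}$ has $4$-ut (from the classification in \cite{ac}), immediately gives ordered $4$-ut. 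You should either invoke this argument or actually perform (and report) the finite check you sketch; note that the check is not a triviality, since for a fixed choice of images of the first three points the possible images of the fourth point form a single orbit of the order-$6$ stabiliser, which need not meet $P_4$, so one must also vary the images inside $P_1,P_2,P_3$.
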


\begin{proof}
By \cite[Theorems 1.3, 1.6]{ac} for $n\ge8$, a group with $4$-ut is $4$-homo\-geneous or
$M_{11}$ ($n=12$), or possibly almost simple with socle $\mathrm{PSL}(2,q)$ where $q$ is
prime or $q=2^p$ for some prime $p$ (with $n=q+1$). The $4$-homogeneous groups with $n\ge 8$ are $4$-transitive except for $\mathrm{PSL}(2,8)$, $\mathrm{P\Gamma L}(2,8)$ ($n=9$), and 
$\mathrm{P\Gamma L}(2,32)$ ($n=33$). 

For $4\le n\le7$, the only $3$-, but not $4$-transitive groups are  $A_5$ ($n=5$) and $\mathrm{PGL}(2,5)$ ($n=6$).

The $4$-transitive groups have ordered $4$-ut, and
the Mathieu group $M_{11}$ ($n=12$) is generously
$3$-transitive (the orbit lengths for the $3$-point stabiliser are $3$ and
$6$), and thus also has ordered $4$-ut.  Almost simple groups contained in $\mathrm{PSL}(2,q)$ with $q\ge5$ prime are not
$3$-primitive (the stabiliser of two points has a normal cyclic subgroup of
composite order $q-1$).

Now consider $G=\mathrm{P}\Gamma\mathrm{L}(2,2^p)$, for $2^p-1$ composite. Once again, these groups have the property that the
$2$-point stabiliser has a regular normal subgroup which is cyclic of 
composite order; so they are not $3$-primitive. 

Finally, $A_5$ clearly has ordered $4$-ut.
\qed\end{proof}

We remark that computation shows that the groups $\mathrm{PSL}(2,8)$, $\mathrm{P}\Gamma\mathrm{L}(2,8)$ ($n=9$), and $\mathrm{P}\Gamma\mathrm{L}(2,32)$ ($n=33$) satisfy
ordered $4$-ut. 

\medskip

Before we consider the case $k=3$, we will give an updated list of the status of the $3$-ut property. The following theorem combines results from \cite{ac} with the corrections from \cite{abc} and adds the (trivial) cases with $n=3,4$.

\begin{prop}\label{p:3ut}  Let $n\ge 3$, $G\le S_n$, then $G$ has the $3$-ut property if it satisfies one of the following properties:
\begin{enumerate}
\item $G$ is $3$-homogeneous;
\item $\mathrm{PSL}(2,q)\le G \le \mathrm{P}\Sigma \mathrm{L}(2,q)$ where $q \equiv 1 \mbox{ mod } 4$ ($n=q+1$); 
\item $G=\mathrm{Sp}(2d,2)$, $d \ge 3$, in either of its $2$-transitive representations ($n=2^{2d-1}\pm 2^{d-1}$);
\item $G=2^d: \mathrm{Sp}(d,2)$, $d \ge4$ and even ($n=2^d$);
\item $G$ is one of $\mathrm{AGL}(1,7)$, ($n=7$), $\mathrm{PSL}(2,11)$ ($n=11$), $2^4:A_6$ ($n=16$), $2^6:G_2(2)$ or its subgroup of index $2$ ($n=64$), $\mathrm{Sz}(8)$, $\mathrm{Sz}(8):3$ ($n=65$), Higman-Sims ($n=176$), $\mathrm{Co}_3$ ($n=276$);

\end{enumerate}

If there are any other groups with $3$-ut, they are one of the following:
\begin{enumerate}
\item[(e)] Suzuki groups $\mathrm{Sz}(q)$ with $q\ge 32$, potentially extended by field automorphisms ($n=q^2+1$);
\item[(f)] $\mathrm{AGL}(1,q)\le G\le \mathrm{A}\Gamma\mathrm{L}(1,q)$, where $q$ is either prime with $q \equiv 11 \mbox{ mod } 12$, or $q=2^p$ with $p$ prime, and  for all $c \in \mathrm{GF}(q)\setminus\{0,1\}$, 
$|\langle -1,c,c-1\rangle|=q-1$  $(n=q$);
\item[(g)]  subgroups of index $2$ in $\mathrm{AGL}(1,q)$, with $q \equiv 11~\mbox{ mod }~12$ and prime, and  for all $c \in \mathrm{GF}(q)\setminus\{0,1\}$, 
$|\langle -1,c,c-1\rangle|=q-1$  $(n=q$).
\end{enumerate}
\end{prop}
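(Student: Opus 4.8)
The plan is to split the proof according to the degree: for $n\ge 5$ the assertion is essentially a reformulation of the $3$-ut classification already available in the literature, while the two degenerate degrees $n=3$ and $n=4$ (not covered there) are handled directly.

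For $n\ge 5$ we have $\lfloor (n+1)/2\rfloor\ge 3$, so the range in which the $k$-ut classification of \cite{ac} applies contains $k=3$. The main step is therefore to quote that classification, incorporate the corrections to the $3$-ut case recorded in \cite{abc}, and rearrange the resulting list of groups into the two blocks of the statement: on one side the groups proved there to have $3$-ut (the $3$-homogeneous groups; the groups with $\mathrm{PSL}(2,q)\le G\le \mathrm{P}\Sigma\mathrm{L}(2,q)$ for $q\equiv 1\pmod 4$; the symplectic groups $\mathrm{Sp}(2d,2)$, $d\ge 3$, in their two $2$-transitive actions; the affine groups $2^d{:}\mathrm{Sp}(d,2)$ with $d\ge 4$ even; and the explicit finite list $\mathrm{AGL}(1,7)$, $\mathrm{PSL}(2,11)$, $2^4{:}A_6$, $2^6{:}G_2(2)$ and its index-$2$ subgroup, $\mathrm{Sz}(8)$, $\mathrm{Sz}(8){:}3$, the Higman--Sims group, $\mathrm{Co}_3$), and on the other side the families that remain undecided there (the Suzuki groups $\mathrm{Sz}(q)$, $q\ge 32$, possibly extended by field automorphisms, together with the one-dimensional affine groups and their index-$2$ subgroups satisfying $|\langle -1,c,c-1\rangle|=q-1$ for all $c\in\mathrm{GF}(q)\setminus\{0,1\}$). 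No new group theory, and in particular no further appeal to the classification of finite simple groups, is needed here; the content is inherited from \cite{ac,abc}.

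It remains to add the cases $n=3$ and $n=4$, which \cite{ac} does not cover since there $\lfloor (n+1)/2\rfloor<3$. For $n=3$ the only $3$-subset is $\Omega$ and the only $3$-partition is the partition into singletons, whose unique transversal is $\Omega$ itself; as $\Omega g=\Omega$ for every $g$, every group of degree $3$ has $3$-ut, and every group of degree $3$ is trivially $3$-homogeneous, so the statement holds with case~(a). For $n=4$ a $3$-subset is the complement of a point and a $3$-partition has the shape $2+1+1$, and a set is a transversal of such a partition exactly when it is the complement of a point of the $2$-part. Hence, for $n=4$, $3$-ut is equivalent to the condition that for every point $p$ and every $2$-subset $B$ there is $g\in G$ with $pg\in B$. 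This fails precisely when some $G$-orbit has size at most $2$ (take $B$ inside the complement of that orbit); otherwise all orbit lengths on four points are at least $3$, which forces $G$ transitive, and a transitive group of degree $4$ clearly satisfies the condition. Since the $3$-subsets of a $4$-set correspond to its points by complementation, transitivity is the same as $3$-homogeneity, so again the statement reduces to case~(a) with no other groups.

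The step I expect to need the most care is bookkeeping rather than mathematics: verifying that the explicit small-degree entries and the precise arithmetic side-conditions on the affine and Suzuki families have been transcribed faithfully from \cite{ac}, that the corrections of \cite{abc} have all been incorporated, and that merging the two source formulations into the single list above creates neither omissions nor duplications. As a safeguard I would re-check a few of the small cases by direct computation --- for instance $\mathrm{AGL}(1,7)$ on $7$ points and $\mathrm{PSL}(2,11)$ on $11$ points --- in the same spirit as the remark following Proposition~\ref{p:ord4ut}.
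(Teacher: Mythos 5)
Your proposal matches the paper's own treatment: Proposition~\ref{p:3ut} is not given a separate proof there beyond the remark that it combines the classification of \cite{ac} with the corrections of \cite{abc} and adds the trivial degrees $n=3,4$, which is exactly your strategy. Your explicit handling of $n=3$ (only one $3$-set and one $3$-partition) and $n=4$ ($3$-ut reduces, via complementation, to transitivity, i.e.\ $3$-homogeneity) is correct and simply spells out what the paper dismisses as trivial.
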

%

With this result, we can prove that the groups with ordered $3$-ut are just
those listed in Theorem~\ref{rank3}.

\begin{prop}\label{p:ord3ut}
Let $n \ge 3$, $G\le \sym$. 
If $G$ satisfies one of the following properties:
\begin{enumerate}
\item $G$ is $3$-transitive;
\item $\mathrm{PSL}(2,q)\le G \le \mathrm{P}\Gamma \mathrm{L}(2,q)$ where $q \equiv 3~\mbox{ mod }~4$ ($n=q+1$); 
\item $\mathrm{PSL}(2,q)\le G \le \mathrm{P}\Sigma \mathrm{L}(2,q)$ where $q \equiv 1~\mbox{ mod }~4$ ($n=q+1$); 
\item $G=\mathrm{Sp}(2d,2)$, $d \ge 3$, in either of its $2$-transitive representations ($n=2^{2d-1}\pm 2^{d-1}$);
\item $G=2^d: \mathrm{Sp}(d,2)$, $d \ge4$ and even ($n=2^d$);
\item $G$ is one of $A_4$ ($n=4$), $\mathrm{PSL}(2,11)$ ($n=11$), $2^4:A_6$ ($n=16$), $2^6:G_2(2)$ or its subgroup of index $2$ ($n=64$), Higman--Sims ($n=176$), $\mathrm{Co}_3$ ($n=276$);
\end{enumerate}
then it satisfies ordered $3$-ut.

Any other group satisfying ordered $3$-ut must satisfy
$\agl(1,2^p)\le G\le\agaml(1,2^p)$ with $p$ and $2^p-1$ prime.
\end{prop}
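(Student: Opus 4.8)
The plan is to prove Proposition~\ref{p:ord3ut} by starting from the $3$-ut classification in Proposition~\ref{p:3ut} and cutting it down using the extra obstructions that ordered $3$-ut imposes over plain $3$-ut, namely $2$-primitivity (Proposition~\ref{p:basicprop}(\ref{list:primitive})) and $2$-transitivity (Proposition~\ref{p:basicprop}(d)). First I would dispose of the small degrees $n=3,4$ by hand: here ordered $3$-ut is equivalent to $S_n$ (for $n=3$) or to being $2$-transitive with $2$-primitive point stabiliser, which for $n=4$ allows $A_4$ and $S_4$, both appearing in the list. Then for the sufficiency direction I would verify that each family in (a)--(f) actually has ordered $3$-ut. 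For the $3$-transitive groups this is immediate from Proposition~\ref{p:basicprop}(a). For the remaining families the strategy is to invoke Proposition~\ref{p:basicprop}(c): each such group has $3$-ut (by Proposition~\ref{p:3ut}), so it suffices to check it is generously $2$-transitive, i.e.\ that the two-point stabiliser's orbits on the remaining points satisfy the self-paired criterion of the preceding proposition, or have distinct sizes. For $\mathrm{PSL}(2,q)\le G\le\mathrm{P}\Gamma\mathrm{L}(2,q)$ with $q\equiv 3\bmod 4$, and for $\mathrm{PSL}(2,q)\le G\le\mathrm{P}\Sigma\mathrm{L}(2,q)$ with $q\equiv 1\bmod 4$, one computes that the stabiliser of $0,\infty$ acts on $\mathrm{GF}(q)^*$ and that $-1$ is (resp.\ is not) a square, making the natural involution $x\mapsto 1/x$ lie in $G$ and pair up the relevant orbits; this gives generous $2$-transitivity. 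For the symplectic families (d), (e) and the sporadic-type examples in (f), I would cite the known orbit structures of the two-point stabilisers (these are small or well documented) and check the self-pairing or distinct-size condition directly.

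For the necessity direction, I would go through the ``leftover'' possibilities in Proposition~\ref{p:3ut}(e)--(g). The Suzuki groups $\mathrm{Sz}(q)$ and their field extensions, and the one-dimensional affine groups $\mathrm{AGL}(1,q)\le G\le\mathrm{A}\Gamma\mathrm{L}(1,q)$, must be tested against $2$-primitivity. The affine groups with $q$ prime are $2$-transitive but the two-point stabiliser --- conjugate to the multiplicative group of $\mathrm{GF}(q)$, which is cyclic of composite order $q-1$ since $q\equiv 11\bmod 12$ forces $q-1$ to have the proper divisor $2$ (indeed $5$ too) --- is imprimitive; hence these fail, as does the index-$2$ subgroup case (g). That leaves only $q=2^p$ with $p$ and $2^p-1$ prime: here $q-1=2^p-1$ is a Mersenne prime, so the cyclic two-point stabiliser of $\mathrm{AGL}(1,2^p)$ has prime order and is primitive, and the extension up to $\mathrm{A}\Gamma\mathrm{L}(1,2^p)$ remains $2$-primitive; combined with the $3$-ut constraint in Proposition~\ref{p:3ut}(f) this is exactly the surviving family $\agl(1,2^p)\le G\le\agaml(1,2^p)$. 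For $\mathrm{Sz}(q)$ I would note that the two-point stabiliser is cyclic of order $q-1$ (composite for all relevant $q\ge 8$), so Suzuki groups are never $2$-primitive and are eliminated entirely --- which explains why $\mathrm{Sz}(8)$ and $\mathrm{Sz}(8){:}3$ appear in Proposition~\ref{p:3ut} but not here. Finally, among the genuinely $3$-ut groups in families (a)--(d), those which are $2$-transitive but not $3$-transitive still need $2$-primitivity checked; for the $\mathrm{PSL}/\mathrm{P}\Sigma\mathrm{L}$ and symplectic families this holds because the relevant two-point stabilisers are (almost) simple or otherwise primitive, while $\mathrm{AGL}(1,7)$ --- whose two-point stabiliser is cyclic of composite order $6$ --- is not $2$-primitive, explaining its demotion from Proposition~\ref{p:3ut}(e) to the $n=7$ case where only $3$-transitive $\mathrm{PGL}(2,7)$-type behaviour or $A_4$-at-$n=4$ survives.

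The main obstacle I anticipate is the bookkeeping for the middle families: confirming generous $2$-transitivity for all of $\mathrm{PSL}(2,q)\le G\le\mathrm{P}\Gamma\mathrm{L}(2,q)$ ($q\equiv3\bmod4$) and $\mathrm{PSL}(2,q)\le G\le\mathrm{P}\Sigma\mathrm{L}(2,q)$ ($q\equiv1\bmod4$) uniformly in $q$ and across all intermediate groups $G$ requires a careful analysis of which field and diagonal automorphisms of $\mathrm{PSL}(2,q)$ normalise the two-point stabiliser and how they act on its orbits on $\mathrm{GF}(q)^*$ --- in particular verifying that the ``wrong'' intermediate subgroups (e.g.\ $\mathrm{P}\Sigma\mathrm{L}$ when $q\equiv3\bmod4$, or the full $\mathrm{P}\Gamma\mathrm{L}$ when $q\equiv1\bmod4$) genuinely fail $3$-ut and so never reach this stage, which is precisely the content of the congruence conditions inherited from Proposition~\ref{p:3ut}. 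The symplectic cases (d) also need the precise suborbit lengths of $\mathrm{Sp}(2d,2)$ acting $2$-transitively on the $2^{2d-1}\pm2^{d-1}$ cosets, which are classical but must be quoted correctly to apply the distinct-sizes criterion. The Suzuki and affine eliminations, by contrast, are clean consequences of the cyclic-stabiliser observation and should go through quickly.
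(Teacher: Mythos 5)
Your route through Proposition~\ref{p:3ut}, pruning by $2$-transitivity and $2$-primitivity and upgrading the survivors via generous $2$-transitivity, works for most families but has a genuine gap exactly at family (b), $\mathrm{PSL}(2,q)\le G\le \mathrm{P\Gamma L}(2,q)$ with $q\equiv 3\pmod 4$. For such $q$ the element $-1$ is a non-square, so $x\mapsto 1/x$ (determinant $-1$) does \emph{not} lie in $\mathrm{PSL}(2,q)$, nor in any $G\le \mathrm{P\Sigma L}(2,q)$; more fundamentally, these groups cannot be generously $2$-transitive at all: a $3$-homogeneous group whose setwise $3$-set stabilisers induce $S_3$ is $3$-transitive, whereas $\mathrm{PSL}(2,q)$ with $q\equiv 3\pmod 4$ is $3$-homogeneous but not $3$-transitive (its $3$-set stabilisers have order $3$ and induce only $A_3$). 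So Proposition~\ref{p:basicprop}(c) is unavailable for any $G$ in this family not containing $\mathrm{PGL}(2,q)$, and your proposal contains no substitute. This is precisely where the paper does its real work: it proves ordered $3$-ut directly by ordering the parts $P_1,P_2,P_3$ by size, mapping $a_1$ into $P_1$, and noting that the pairs $(x,y)$ with $(a_1,x,y)\in(a_1,a_2,a_3)^G$ form a Paley tournament on the remaining $q$ points; since any two vertices are dominated by exactly $(q-3)/4$ points while $|P_3|\ge q/3$, there must be an arc from $P_2$ to $P_3$ (with the case $|P_2|=1$ handled separately by an out-degree argument). You flagged this family as the main obstacle, and rightly so --- the property you planned to verify there is false, so some counting argument of this kind is indispensable.

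The remainder of your outline is essentially the paper's proof: for $q\equiv 1\pmod 4$ the point stabiliser contains $x\mapsto -x+c$ (as $-1$ is a square), so all its orbitals are self-paired and generous $2$-transitivity follows (the paper phrases this as ``even order and rank at most $3$''); the symplectic and sporadic cases go by distinct suborbit lengths; and the eliminations leave only $\mathrm{AGL}(1,2^p)\le G\le \mathrm{A\Gamma L}(1,2^p)$ with $2^p-1$ a Mersenne prime. Two smaller slips to repair: in the affine and Suzuki eliminations it is the \emph{one-point} stabiliser acting on the remaining points that must be shown imprimitive --- for $\mathrm{AGL}(1,q)$ it is the regular cyclic group $\mathrm{GF}(q)^*$ of composite order $q-1$, and for $\mathrm{Sz}(q)$ the point stabiliser of order $q^2(q-1)$ has a normal subgroup of order $q$ whose orbits give blocks --- whereas a cyclic \emph{two-point} stabiliser of composite order does not by itself contradict $2$-primitivity. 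Also, the $3$-homogeneous affine groups $\mathrm{AGL}(1,8)$, $\mathrm{A\Gamma L}(1,8)$, $\mathrm{A\Gamma L}(1,32)$ arise under Proposition~\ref{p:3ut}(a), not under the leftover cases, and must be explicitly routed into the undecided Mersenne family rather than into your sufficiency check.
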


\begin{proof}
According to  Proposition \ref{p:3ut}, groups with
$3$-ut are one of the five types in the proposition, or potentially one of the three additional types listed.

If $G$ is $3$-transitive, then it has the ordered $3$-ut property.

Suppose that $G$ is $3$-homogeneous but not $3$-transitive. 
If $3\le n \le 5$, then $G$ is $A_4$ ($n=4$), which clearly has ordered $3$-ut, $\agl(1,5)$ ($n=5$), which we will exclude below, or nor $2$-transitive and hence 
does not have ordered $3$-ut.

If $n\ge 6$, then $G$ is $\agl(1,8)$, $\agaml(1,8)$ ($n=8$) or $\agaml(1,32)$ ($n=32$), or $G$ contains $\mathrm{PSL}(2,q)$ for
$q\equiv3$~(mod~$4$) ($n=q+1$). The affine groups are included in our undecided cases, so assume $G$ contains such a $\mathrm{PSL}(2,q)$.

Let $\{a_1,a_2,a_3\}$ be an ordered $3$-set and $\{P_1,P_2,P_3\}$ an ordered $3$-partition of the underlying set $\Omega$.
Without loss of generality, we may assume that the parts $P_1,P_2,P_3$ are arranged in
increasing order of size.
Using the transitivity of $G$, we may assume that $a_1$ can be mapped
into $P_1$, and indeed that $a_1\in P_1$. Now the set
\[\{(x,y):(a_1,x,y)\in(a_1,a_2,a_3)^G\}\]
is the edge set of a Paley tournament on $\Omega\setminus\{a_1\}$. If this
tournament includes an arc from $P_2$ to $P_3$, then we are done; so suppose
not. If $|P_2|=1$, then $|P_1|=1$, and all arcs between $P_2$ and $P_3$ point
into the point in $P_2$; so the tournament has out-degree at most $1$, a
contradiction. So suppose that $|P_2|>1$.
In the Paley tournament on $q$ points, any two points are dominated by
precisely $(q-3)/4$ points; but if there are no arcs from $P_2$ to $P_3$, then
two points in $P_2$ are dominated by every point in $P_3$, and by assumption
there are at least $q/3$ such points. So $(q-3)/4\ge q/3$, a contradiction.

So $\mathrm{PSL}(2,q)$, and any overgroup, has ordered $3$-ut for $q\equiv3$~(mod~$4$).

We claim that, with the exceptions of $\mathrm{AGL}(1,7)$ ($n=7$), $\mathrm{Sz}(8)$ and $\mathrm{Sz}(8):3$ ($n=65$),  types (b)--(e) in Proposition \ref{p:3ut} are generously $2$-transitive, and so have
ordered $3$-ut.  For types (c),(d), and most groups of type (e), the $2$-point stabilisers have
all orbits of different sizes, these being
\begin{itemize}\itemsep0pt
\item $2^{2d-2}\pm2^{d-1}-2$ and
$2^{2d-2}$ for type (c); 
\item $2^{2d-1}-2$ and $2^{2d-1}$ for type (d); 
\item $3$, $6$ for $\mathrm{PSL}(2,11)$ ($n=11$);
\item $6$, $8$ for $2^4:A_6$ ($n=16$);
\item $6$, $24$ and $32$ for $2^6:G_2(2)$ and its subgroup ($n=64$); 
\item $12$, $72$ 
and $90$ for $\mathrm{HS}$ ($n=176$); 
\item $112$ and $162$ for $\mathrm{Co}_3$ ($n=276$). 
\end{itemize} 

For (b), since the point stabiliser has
even order and rank at most $3$, all its non-trivial orbitals are self-paired.

For the groups of type (f), if $q-1$ is not prime, then the point stabiliser of $\mathrm{A}\Gamma\mathrm{L}(1,q)$ has a proper normal subgroup, and
so these groups are not $2$-primitive, and hence do not have ordered $3$-ut. The same argument excludes $\agl(1,5)$ and $\agl(1,7)$.

Of the remaining groups, we observe that
the Suzuki groups do not have ordered $3$-ut since they are not $2$-primitive
(the point stabiliser has a normal subgroup of order $q$); and subgroups of
index $2$ in $\mathrm{AGL}(2,q)$ for odd $q$ fail to be $2$-transitive. 

Hence the only open cases remaining are groups containing $\agl(1,q)$ with $q-1$ is prime, which occurs only if $q=2^p$ for $p$ prime (and 
$2^p-1$ is a Mersenne prime). 
\qed\end{proof}
Computation shows that $\mathrm{AGL}(1,32)$ and
$\mathrm{A}\Gamma\mathrm{L}(1,32)$ do indeed have ordered $3$-ut. 

\section{Strongly factorizable semigroups and maps with fixed rank}\label{three}

A monoid $S$ with group of units $G$ and set of idempotents $E$ is said to be {\em strongly factorizable} if $S=EG$. 
Let $\Omega$ be a finite set. Every finite semigroup can be embedded in some $T(\Omega)$, a strongly factorizable semigroup. More generally, any semigroup $S$ such that $\Sym(\Omega)\le S\le T(\Omega)$ is strongly factorizable. 

Let $k$ be a natural number; the goal of this section is to classify the groups $G\le \Sym(\Omega)$ such that $\langle G,t\rangle$ is strongly factorizable for all rank $k$ transformation $t\in T(\Omega)$. The next result links this goal and the results of the previous sections. 


\begin{lemma}
Let $\Omega$ be a finite set, $G\le\Sym(\Omega)$  and $k\le|\Omega|$. Then the following are equivalent:
\begin{enumerate}
\item $G$ possesses the ordered $k$-ut property;
\item for all rank $k$ transformations $t\in T(\Omega)$, we have 
\[
\langle G, t\rangle = EG,
\] where $E$ is the set of idempotents of $\langle G ,t\rangle$. 
\end{enumerate} 
\end{lemma}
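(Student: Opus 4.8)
The plan is to prove the equivalence by unpacking what it means for an element of $\langle G,t\rangle$ to lie in $EG$, and relating this directly to the combinatorial condition in ordered $k$-ut. First I would recall the structure of $\langle G,t\rangle$ when $t$ has rank $k$: since $G$ acts on $\Omega$ and $t$ drops rank, every element of $\langle G,t\rangle$ is either in $G$ or has rank at most $k$, and those of rank exactly $k$ are precisely the elements of the form $g_0 t g_1$ with $g_0,g_1\in G$ (any longer word either collapses rank further or can be shortened). An element $s=g_0 t g_1$ has rank $k$, its image is $\Omega t g_1$ (a $k$-set), and its kernel is the partition $\ker(g_0 t)$ refined by... more precisely $\ker(s)=(\ker t)g_0^{-1}$ read as a $k$-partition of $\Omega$. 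The key observation is that a rank $k$ transformation $s$ factors as $s=eg$ with $e^2=e\in\langle G,t\rangle$ and $g\in G$ if and only if there is an idempotent of rank $k$ in $\langle G,t\rangle$ with the same kernel as $s$; and an idempotent of rank $k$ with a given kernel $\pi=\ker(s)$ exists in $\langle G,t\rangle$ exactly when some $k$-set in the "image-orbit" $\{\Omega t g : g\in G\}$ is a transversal of $\pi$.

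Carrying this out: for the direction (a)$\Rightarrow$(b), assume $G$ has ordered $k$-ut. Take any element of $\langle G,t\rangle$; if it lies in $G$ we are done (it is $1\cdot g$). Otherwise it has some rank $j\le k$. I expect the cleanest route is to first settle rank exactly $k$: write $s=g_0 t g_1$, let $A=\Omega t$ (a $k$-set) and let $\pi$ be the $k$-partition $\ker(g_0 t)$. Ordered $k$-ut gives $h\in G$ sending the blocks of $\pi$ bijectively onto the points of $A$ in the order matching how $g_0 t$ maps; then $g_0 t h$ is an idempotent $e$ of rank $k$ with $\ker e=\ker(g_0 t)=\ker s$, and one checks $s=e\cdot(h^{-1}g_1)$. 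For rank $j<k$, note that a rank $j$ element of $\langle G,t\rangle$ arises as a product involving $t$ at least twice (or $t$ followed by collapsing), and I would reduce to the rank $k$ case by observing that ordered $k$-ut implies ordered $j$-ut (Proposition~\ref{p:basicprop}(b)), so the same transversal argument produces the required idempotent; alternatively, one shows $\langle G,t\rangle\setminus G$ is generated as a union of orbits of idempotent-times-$G$ by the rank $k$ piece. The direction (b)$\Rightarrow$(a) is the contrapositive of the combinatorial failure: if $G$ lacks ordered $k$-ut, pick a $k$-set $A=(a_1,\dots,a_k)$ and a $k$-partition $\pi=(P_1,\dots,P_k)$ with no $g\in G$ sending $a_i\in P_i$ for all $i$; build a rank $k$ transformation $t$ whose kernel is $\pi$ and whose image is $A$ (say $P_i\mapsto a_i$), and show that $t$ itself cannot be written as $eg$ — because any such $e$ would be a rank $k$ idempotent in $\langle G,t\rangle$ with kernel $\pi$, forcing its image (which lies in the $G$-orbit-closure of $A$, in fact equals $Ag$ for some $g$) to be a transversal of $\pi$ compatible with the ordering, contradicting the choice of $A,\pi$.

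The main obstacle I anticipate is the bookkeeping in the rank $j<k$ case and, relatedly, pinning down exactly which rank $k$ transformations appear in $\langle G,t\rangle$ and with which kernels — one must verify that every rank $k$ element really is of the form $g_0 t g_1$ (so its kernel is a $G$-translate of $\ker t$) and that the images that occur are exactly $\{(\Omega t)g : g\in G\}$, so that the ordered $k$-ut condition is precisely the right translation-invariant statement. A secondary subtlety is matching the \emph{ordering} of parts to points: ordinary $k$-ut would only give an unordered transversal, but because we get to choose $g_0$ (equivalently, act by the setwise stabiliser of $A$, cf. Proposition~\ref{p:basicprop}(c)) the ordered version is what is genuinely needed, and the proof should make clear why the unordered property does not suffice. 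Once these structural facts about $\langle G,t\rangle$ are isolated, both implications follow by short transversal arguments.
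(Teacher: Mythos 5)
Your overall strategy is the paper's: produce an idempotent with the same kernel as a given element by applying the ordered ut property to its kernel classes and image, and, conversely, feed a map with prescribed kernel and image into the factorization $t=eg$ to extract a witness for ordered $k$-ut. Your converse direction is essentially identical to the paper's and is correct: with $P_it=\{a_i\}$, the factorization gives $e=tg^{-1}$ idempotent with the same kernel classes as $t$ and fixing its image pointwise, so $\{a_ig^{-1}\}=P_ie\subseteq P_i$ and $g^{-1}$ is the required group element.

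The gap is in how you organize the forward direction. The structural claim you lean on --- that the rank~$k$ elements of $\langle G,t\rangle$ are precisely those of the form $g_0tg_1$ --- is false in general, and you flag it yourself as the main thing to verify. An element such as $tgt$ of rank $k$ has kernel $\ker t$ and image $\Omega t$, but the bijection it induces from kernel classes to image points is twisted by the permutation that $gt$ induces on the $k$-set $\Omega t$, and there need be no $g_1\in G$ realizing that permutation: groups with ordered $k$-ut need not be $k$-transitive (indeed need not even induce the full symmetric group on any $k$-set, e.g.\ the $3$-homogeneous, non-$3$-transitive groups in case (b) of Theorem~\ref{rank3}), so one cannot in general adjust by the setwise stabiliser of the image as in Proposition~\ref{p:basicprop}(c). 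Likewise ``reduce the rank $j<k$ case to the rank $k$ case'' is not the right reduction. Fortunately the normal form is unnecessary, and the correct uniform argument is already implicit in your sketch and is exactly the paper's proof: take \emph{any} $a\in\langle G,t\rangle$ of rank $l\le k$, list its kernel classes $(A_1,\dots,A_l)$ together with the corresponding image points $(A_1a,\dots,A_la)$, and apply ordered $l$-ut (which follows from ordered $k$-ut by Proposition~\ref{p:basicprop}(b)) to obtain $g\in G$ with $A_iag\in A_i$ for all $i$; then $ag$ is an idempotent and $a=(ag)g^{-1}\in EG$. No analysis of which kernels, images, or normal forms occur in $\langle G,t\rangle$ is needed, so the ``bookkeeping'' you anticipate simply disappears.
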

\begin{proof}

First, assume that $G$ has the ordered $k$-ut property.
Let $a\in\langle G,t\rangle$ be a map of rank $l\le k$. As $G$ has ordered $k$-ut, it has ordered $l$-ut (Proposition \ref{p:basicprop} (b)). Therefore, given a sequence of kernel classes of $a$, say   $(A_1,\ldots,A_l)$, and the corresponding $l$-tuple of images $(A_1 a,\ldots,A_l a)$, there exists $g\in G$ such that $A_i a g \subseteq A_i$, for all $i\in \{1,\ldots,l\}$; thus $ag$ is an idempotent and $a=(ag)g^{-1}\in EG$.  This proves the direct implication. 

Conversely, let $(A_1,\ldots,A_k)$ be a $k$-partition of $\Omega$ and let $(a_1,\ldots ,a_k)$ be a $k$-tuple of different elements of $\Omega$. We claim that there exists $g\in G$ such that $a_ig \in A_i$, for all $i\in \{1,\ldots,k\}$. In fact, let $t\in T(\Omega)$ be a map such that $A_i t =\{a_i\}$.  By assumption $\langle G,t\rangle$ is  strongly factorizable and hence $t=eg$, for some $g\in G$ and idempotent $e \in  \langle G,t\rangle$, and thus $tg^{-1}=e$. Because $e$ and $t$ have the same kernel classes and every point in the image of $e$ is fixed, we have that $A_i e \subseteq A_i$ for all $i$. It follows that    $\{a_i g^{-1}\}=A_itg^{-1}=A_i e\subseteq  A_i$ (for all $i \in \{1,\ldots, k\}$). The result follows. 
\qed  
\end{proof}

Glueing together the previous result with the classification of the groups possessing the  ordered $k$-ut in Propositions \ref{p:ord6ut}, \ref{p:ord5ut}, \ref{p:ord4ut}, and \ref{p:ord3ut}, we get Theorems \ref{rank2}--\ref{rank3} which are the main theorems of the first part of this paper. 

\section{Semigroups and their normalizers}\label{four}

Let $S\le \trans$ be a semigroup and let $G\le \sym$ be its normalizer in $\sym$. We are interested in the relation between $S$ and  $\langle S,G\rangle$. On one hand the semigroup $\langle S,G\rangle$  might be more accessible to study since we can take advantage of group theoric results, but on the other hand the properties of $S$ might be very different from the properties of  $\langle S,G\rangle$. 

For example, we might be unable to verify if a given semigroup  $S$ is regular.  If all $t\in S$ have rank at most $k$ and if $G$  has the $k$-ut property, then the semigroup $\langle S,G\rangle$ is easily seen to be regular. Hence, to prove the regularity of $S$, we need to prove that  regularity of $\langle S,G\rangle$  implies regularity of $S$.  Therefore, the goal of this section is to study semigroup properties that carry from $\langle S,G\rangle$ to $S$. 

We start by proving a general result. 
\begin{lemma}
Let $S\le \trans$ and let $G$ be its normalizer in $\sym$. Then 
\[
\langle S,G\rangle = SG. 
\]  
\end{lemma}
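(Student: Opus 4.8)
The plan is to show directly that the set $SG=\{sg : s\in S,\ g\in G\}$ is closed under multiplication; since it obviously contains $S$ and $G$ and is contained in $\langle S,G\rangle$, this will give the claimed equality. So take two typical elements $s_1g_1$ and $s_2g_2$ with $s_1,s_2\in S$ and $g_1,g_2\in G$, and consider their product $s_1g_1s_2g_2$. The obstruction is the stray group element $g_1$ sitting between the two semigroup elements; I want to move it to the right past $s_2$.

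The key step is the normalizer hypothesis. Since $G$ normalizes $S$, we have $g_1^{-1}Sg_1=S$ (or equivalently $g_1 S g_1^{-1}=S$, depending on the side convention), so in particular $g_1 s_2 g_1^{-1}\in S$; call this element $s_2'$. Then
\[
s_1g_1s_2g_2 = s_1\,(g_1s_2g_1^{-1})\,g_1g_2 = (s_1 s_2')(g_1g_2),
\]
and here $s_1s_2'\in S$ because $S$ is a semigroup and $g_1g_2\in G$ because $G$ is a group. Hence $s_1g_1s_2g_2\in SG$, so $SG$ is a subsemigroup of $\trans$. I should be slightly careful about the action convention used throughout the paper (maps written on the right, so composition and conjugation are $t^g=g^{-1}tg$), but the computation is symmetric and goes through verbatim with the correct bracketing.

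To finish, I note $SG\subseteq\langle S,G\rangle$ trivially, and conversely $S\subseteq SG$ (take $g=1$, using that $S$ is a monoid, or simply $1\in G$) and $G\subseteq SG$ (take $s=1$); since $SG$ is a subsemigroup containing both generating sets, $\langle S,G\rangle\subseteq SG$. Therefore $\langle S,G\rangle=SG$. The only genuine content is the normalizer manipulation in the displayed line; everything else is bookkeeping, so the proof is indeed short.
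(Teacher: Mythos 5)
Your proof is correct and is essentially the paper's argument: the paper rewrites a general word $g_1s_1g_2s_2\cdots g_ks_kg_{k+1}$ in one pass as $s_1^{g_1}s_2^{g_1g_2}\cdots s_k^{g_1\cdots g_k}(g_1\cdots g_{k+1})$, and your closure computation $s_1g_1s_2g_2=\bigl(s_1\,(g_1s_2g_1^{-1})\bigr)(g_1g_2)$ is exactly the single conjugation step that this telescoping iterates, so the two proofs differ only in bookkeeping. The only point worth noting is that your step ``$G\subseteq SG$, take $s=1$'' (and likewise the stated equality itself) tacitly uses that $S$ contains the identity transformation, which is the paper's standing convention for the later results in that section.
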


\begin{proof}
For $s\in S$ and $g\in G$ let $s^g$ denote $g^{-1}sg$. Let $t\in \langle S,G\rangle$. We now have (for some $g_1, \dots, g_{k+1} \in G$, $s_1, \dots s_k \in S$),
\[\begin{array}{rcl}
t&=&g_1s_1g_2s_2\ldots g_ks_kg_{k+1}\\
  &=&g_1s_1g^{-1}_1(g_1g_2)s_2(g_1g_2)^{-1}(g_1g_2g_3)\ldots (g_1\ldots g_k)s_k(g_1\ldots g_k)^{-1}(g_1\ldots g_kg_{k+1})\\
  &=&s^{g_1}_1s^{g_1g_2}_2\ldots s^{g_1\ldots g_k}_k (g_1\ldots g_{k+1})\\
  &=&sg \in SG,
\end{array}
\]where $s=s^{g_1}_1s^{g_1g_2}_2\ldots s^{g_1\ldots g_k}_k\in S$ and $g=g_1\ldots g_{k+1}\in G$. Thus $\langle S,G\rangle\subseteq SG$. The reverse inclusion is obvious. 
\qed 
\end{proof}

\subsection{Regularity}
 Recall that a semigroup $S$  is regular if for all $a\in S$ there exists $a'\in S$ such that $a=aa'a$. Two elements $a,b\in S$ are said to be ${\mathcal R}$-related if there exist $u,v\in S^1$ such that $a=bu$ and $b=av$ ($S^1$ denotes the monoid obtained by adjoining an identity to $S$). Similarly, $a,b\in S$ are said to be ${\mathcal L}$-related if there exist $u,v\in S^1$ such that $a=ub$ and $b=va$. It is well know that a semigroup is regular if and only if  every element is ${\mathcal R}$-related (or ${\mathcal L}$-related) to an idempotent. In what follows, by a transformation monoid $S\le \trans$ we mean a semigroup of transformations containing the identity transformation. 
  
The key result in this subsection is the following lemma. 

\begin{lemma}\label{main}
Let $S\le T_n$ be a transformation monoid  and $G$ be the normalizer of $S$ in the symmetric group. If $a\in S$ is ${\mathcal R}$-related in $SG$ to an idempotent of $SG$, then $a$ is ${\mathcal R}$-related in $S$ to the same  idempotent. 
\end{lemma}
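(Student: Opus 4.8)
The claim concerns an element $a\in S$ which, \emph{viewed inside the bigger semigroup $SG$}, is $\mathcal R$-related to an idempotent $e\in SG$; we want to upgrade this to an $\mathcal R$-relation \emph{inside $S$}. The natural first move is to unpack the hypothesis: there exist $u,v\in (SG)^1$ with $a=eu$ and $e=av$. Since $SG$ has $G$ as a group of units (the identity is in $S$, and $G$ normalizes $S$, so $SG$ is a monoid with group of units containing $G$), we can write $u=s_1 g_1$ and $v=s_2 g_2$ with $s_i\in S^1$ and $g_i\in G$. I expect the cleanest route is to exploit the transformation-semigroup structure directly: in $T_n$, two transformations are $\mathcal R$-related exactly when they have the same kernel, and $a=eu$ together with $e=av$ forces $\ker a=\ker e$ (the kernels can only coarsen along each product, and going around the loop returns to $\ker a$). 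So $a$ and $e$ have the same kernel classes, say $A_1,\dots,A_\ell$, where $\ell=\rank a=\rank e$.

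The heart of the argument is then to produce the witnesses \emph{inside $S$}. Because $e$ is idempotent with the same kernel as $a$, each kernel class $A_i$ contains exactly one fixed point of $e$, and $e$ sends all of $A_i$ to that point. Now $a=eu=e s_1 g_1$; since $e$ fixes its image pointwise and $a,e$ share a kernel, the transformation $s_1 g_1$ must map the image of $e$ bijectively onto the image of $a$ in a way compatible with $a$. Concretely, I would argue: restricted to $\Im e$, the map $s_1 g_1$ agrees with a bijection $\Im e\to\Im a$, and symmetrically from $e=av=as_2 g_2$ one gets that $s_2 g_2$ restricted to $\Im a$ is the inverse bijection $\Im a\to\Im e$. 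The point is that these restrictions are determined, but $s_1 g_1$ and $s_2 g_2$ themselves may involve the group part $g_1,g_2$. To eliminate the group, observe that $ag_1^{-1}\cdot g_1 = a$, but more usefully: set $b:=g_1 e s_1$ or some such conjugate-adjusted element — here one uses that $G$ normalizes $S$, so conjugates of elements of $S$ by $G$ lie in $S$, which lets one absorb the stray group elements back into $S$. I would check that $a = e\,(s_1 g_1) = a v (s_1 g_1) = a (s_2 g_2)(s_1 g_1)$, and that the element $s_2 g_2 s_1 g_1$ can be rewritten, using normality, as $s' g'$ with $s'\in S$; then examine the right action on $\Im a$ to see that the relevant $S$-part already realizes $a = a s'$, and similarly $e = a\,(\text{something in }S)$.

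Here is the sharpest form of the plan. From $a=eu$ and $e=av$, compose to get $a = e u = (av)u = a(vu)$ and $e = av = (eu)v = e(uv)$, so $a\,\mathcal R\,e$ is witnessed by $vu$ and $uv$ in $(SG)^1$. Write $vu = s\,g$ with $s\in S^1$, $g\in G$ (using that $SG=\langle S,G\rangle$ is a monoid with unit group containing $G$). Then $a = a s g$. Since $a$ and $as$ have the same kernel (the kernel of $as g$ equals that of $a$, and right-multiplying by the permutation $g$ doesn't change kernels, so $\ker(as)=\ker a$) and $\Im(asg)=\Im a$ forces $asg$ to restrict to the \emph{identity} permutation of $\Im a$ — wait, more precisely $asg$ restricted to $\Im a$ is the identity, hence $g$ restricted to $\Im(as)$ is determined by $s$; the clean conclusion is that $a(sg)=a$ and, replacing $s$ by $s':=s g$ if $sg\in S$... but $g$ need not be in $S$. \emph{This is the main obstacle}: disentangling the group element $g$ from the $S$-part so that both witnesses land in $S$, not merely in $SG$. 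I would resolve it by the following device: from $a=asg$ we get $a g^{-1} = a s$, and $ag^{-1}\in S$ is impossible in general either. Instead, iterate: $a = asg = (asg)sg = a (sg)^2 = \cdots = a(sg)^m$ for all $m$; choosing $m$ so that $(sg)^m$ lies in the kernel's... Actually the right trick, and the one I'd commit to, is that $sg$ acts as a permutation on $\Im a$ (since $a = a\cdot sg$ forces $sg$ to permute $\Im a$ bijectively), this permutation has finite order $r$, and $a = a(sg)^r$ where $(sg)^r$ fixes $\Im a$ pointwise. Now $(sg)^r \in SG$; write $(sg)^r = s^* g^*$; since it fixes $\Im a$ pointwise, evaluating on $\Im a$ shows $g^*$ restricted to $\Im(s^*|_{\Im a})$ is forced, and by replacing with a suitable power one arranges $g^* = 1$, giving $a = a s^*$ with $s^*\in S$; symmetrically $e = a s^{**}$ with $s^{**}\in S$ (using that $e$'s image is inside everything in sight and $e$ is idempotent). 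That establishes $a\,\mathcal R\,e$ in $S$. The delicate bookkeeping — ensuring the power trick genuinely kills the group part rather than just shuffling it — is where I'd spend the care, leaning on the facts that $G$ normalizes $S$, that kernels are unaffected by post-multiplication by permutations, and that an element fixing its own image pointwise is the relevant idempotent.
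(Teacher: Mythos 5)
Your sketch stalls at exactly the step the lemma is about, and the device you propose for that step does not work as stated. First, $vu$ and $uv$ do not witness $a\mathrel{\mathcal R}e$: they only witness the self-relations $a=a(vu)$ and $e=e(uv)$. The easy half of the lemma is $a=ea$ (immediate from $a=eu$ and $e^2=e$, with witness $a\in S$); the entire difficulty is the other half, producing $v'\in S$ with $e=av'$ --- which in particular shows $e=av'\in S$, i.e.\ that the idempotent itself lies in $S$, something the statement requires and your proposal never secures (the paper gets it from McAlister's theorem that $S$ and $SG$ have the same idempotents). Your power trick is applied only to the self-relation $a=a(vu)$, so even if it succeeded it would deliver $a=as^*$, a statement about $a$ alone that does not relate $a$ to $e$; and ``symmetrically'' does not apply, since the symmetric computation yields $e=es^{**}$, not $e=as^{**}$. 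Second, the mechanism for eliminating the group element is unjustified: you take $r$ to be the order of the permutation induced by $sg$ on $\Im a$ and claim that, because $(sg)^r$ fixes $\Im a$ pointwise, a suitable power has trivial group part. Fixing $\Im a$ pointwise puts no constraint on the factorization $s^*g^*$ (which is not unique), and nothing about that choice of $r$ forces any power of $sg$ into $S$. The correct device --- which you name as something to ``lean on'' but never execute --- is the normalizer rewriting $(sg)^m=\bigl(s\,(gsg^{-1})(g^2sg^{-2})\cdots(g^{m-1}sg^{-(m-1)})\bigr)g^m$, whose first factor lies in $S$, combined with choosing $m$ to be the order of $g$ in $G$, not the order of the induced permutation on $\Im a$.

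For what it is worth, your outline can be repaired, but it must be aimed at the right relation: from $e=av$ put $w=va$, note $aw=(av)a=ea=a$, hence $e=aw^mv$ for every $m\ge0$; writing $v=sg$ with $s\in S$, $g\in G$ and using that $G$ normalizes $S$, one gets $w^mv=(\mbox{element of }S)\,g^{m+1}$, so taking $m+1$ equal to the order of $g$ in $G$ gives $e=av'$ with $v'\in S$, whence $e\in S$ and $a\mathrel{\mathcal R}e$ in $S$. This is genuinely different from the paper's route, which first places the idempotent in $S$ via McAlister's theorem and then drags the witness into $S$ by an iterative conjugation argument exploiting idempotency and the finiteness of $G$. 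As written, however, your proposal has a genuine gap: the direction $e=a\cdot(\mbox{element of }S)$ is never established, and the stated power argument would not establish it.
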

\begin{proof}
Let $a\in S$ and assume that $a$ is ${\mathcal R}$-related in $SG$ to an idempotent in $SG$, that is, there exist $b\in S, h \in G$ such that $(bh)(bh)=bh$ and for some $b_1i_1,b_2i_2\in SG$, we have $a=(bh)(b_1i_1)$ and $bh=a(b_2i_2)$.  The claim is clearly true if $bh$ is the identity, so assume this is not case.

Now, by a theorem of McAlister, for all $s\in S$, the semigroups $\langle s,G\rangle \setminus G $ and $\langle g^{-1}sg\mid g\in G\rangle\setminus G$  have the same idempotents (\cite[Lemma 2.2]{mcalister} and \cite[Lemma 2.2]{AMS}). As $S=\bigcup_{s\in S}\langle g^{-1}sg\mid g\in G\rangle$ and $SG=\bigcup_{s\in S}\langle s,G\rangle$, it follows that $S$ and $SG$ have the same idempotents. Thus $bh\in S$. 

It remains to prove that $a$ and $bh$ are ${\mathcal R}$-related in $S$, that is, there exist $u,v\in S$ such that $a=(bh)u$ and $bh=av$. Since $(bh)(b_1i_1)=a\in S$, we can take $u= (bh)(b_1i_1)$ so  that $a=(bh)u=(bh)\ (bh)(b_1i_1)$. 

Observe that $a(b_2 i_2)(bh)=(bh)(bh)=bh$. We claim that $i_2bh\in S$ and hence $b_2i_2bh\in S$, thus proving the theorem. 

We start by proving that $hbh\in S$. In fact, 
\[\begin{array}{rcl}
h^{-1}bh,b\in S&\Rightarrow&  h^{-1}bhb\in S\Rightarrow  h^{-2}bhbh=h^{-2}bh\in S\Rightarrow h^{-2}bhb\in S\Rightarrow \\
                        &\Rightarrow& h^{-3}bhbh=h^{-3}bh\in S\Rightarrow \ldots \Rightarrow  h^{-k}bh\in S.    
\end{array}\]
As $G$ is finite, for some $k$ we have $h^{-k}=h$. The claim follows.

Now we claim that $i^{-k}_2b\in S$ for all natural numbers $k$. We proceed by induction. From  $a,b_2\in S$ we get $ab_2\in S$ and hence $i^{-1}_2ab_2i_2\in S$, thus $ i^{-1}_2 bh \in S$ so that $ i^{-1}_2 bhb \in S$; as $bhbh=bh$, we have $bhb=b$ which together with  $ i^{-1}_2 bhb \in S$ yields $ i^{-1}_2 b \in S$. 

Now suppose that $i^{-k}_2 b\in S$ (for some natural $k\ge 1$); we want to prove that $i^{-(k+1)}_2b\in S$. From $i^{-(k+1)}_2b_2i^{k+1}_2,i^{-k}_2b\in S$, we get  $i^{-(k+1)}_2b_2i^{(k+1)}_2i^{-k}_2b=i^{-(k+1)}_2b_2 i_2 b\in S$. Thus 
\[\begin{array}{rcccl}
S&\ni& \left( i^{-(k+1)}_2ai^{(k+1)}_2        \right)   \left(    i^{-(k+1)}_2b_2 i_2 b           \right)&=& i^{-(k+1)}_2ai^{(k+1)}_2 i^{-(k+1)}_2b_2 i_2 b\\  
   &    &                                                                                                                                          &=&i^{-(k+1)}_2ab_2 i_2 b\\
   &     &                                                                                                                                         &=& i^{-(k+1)}_2bh b \\
   &	   &																	&=&i^{-(k+1)}_2b. 
\end{array}
\] It is proved that $i^{-k}_2b\in S$ for all natural $k$. 

As $G$ is finite, for some $k$ we have $i_2b=i^{-k}_2b\in S$. Since $i_2b,hbh\in S$, it follows that $i_2bhbh=i_2bh\in S$. As $b_2,i_2bh\in S$, we get 
 $b_2i_2bh\in S$ and hence $a(b_2i_2bh)=(ab_2i_2)(bh)=(bh)^2=bh$. It is proved that $a$ and the idempotent $bh$ are ${\mathcal R}$-related in $S$. \qed
\end{proof}

By symmetry we get the following.

\begin{lemma}
Let $S\le T_n$ be a transformation monoid and $G$ be the normalizer of $S$ in the symmetric group $G$. If $a\in S$ is ${\mathcal L}$-related in $SG$ to an idempotent of $SG$, then $a$ is ${\mathcal L}$-related in $S$ to the same idempotent. 
\end{lemma}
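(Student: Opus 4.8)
The plan is to deduce this ``$\mathcal{L}$-version'' directly from Lemma~\ref{main} via a standard duality argument, rather than re-running the inductive computation. Recall that for any transformation $t\in T_n$ acting on the right, there is an anti-isomorphism of $T_n$ induced by conjugation in $\Sym(\Omega)$ combined with the transpose: concretely, if we think of transformations as Boolean matrices, $t\mapsto t^\top$ is an anti-isomorphism of $T_n$ onto itself, and it sends $\Sym(\Omega)$ to $\Sym(\Omega)$, idempotents to idempotents, and interchanges $\mathcal{L}$ and $\mathcal{R}$. More carefully, to stay inside the category of transformation monoids one uses the standard device: $S\le T_n$ has an anti-isomorphic copy $S^{\mathrm{op}}\le T_N$ for a possibly larger $N$ (e.g.\ the right regular representation of $S^{\mathrm{op}}$), under which the normalizer of $S$ in $\Sym(\Omega)$ corresponds to the normalizer of $S^{\mathrm{op}}$ in the relevant symmetric group.

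First I would make precise the duality: given $S\le T_n$ and its normalizer $G$, produce an anti-isomorphism $\varphi$ from $SG$ onto a transformation monoid $T$ such that $\varphi(S)$ is a transformation monoid, $\varphi(G)$ is a group of permutations normalizing $\varphi(S)$, and $\varphi(SG)=\varphi(S)\varphi(G)$. Since $\varphi$ reverses products, it carries $\mathcal{R}$-classes to $\mathcal{L}$-classes and vice versa, and carries idempotents to idempotents. Second, I would observe that ``$G$ is the normalizer of $S$'' translates under $\varphi$ into ``$\varphi(G)$ is the normalizer of $\varphi(S)$'', because normality is a purely equational condition ($gSg^{-1}=S$) preserved by anti-isomorphisms once we note $\varphi(g^{-1})=\varphi(g)^{-1}$ for a group element. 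Third, suppose $a\in S$ is $\mathcal{L}$-related in $SG$ to an idempotent $e\in SG$. Applying $\varphi$, the element $\varphi(a)\in\varphi(S)$ is $\mathcal{R}$-related in $\varphi(S)\varphi(G)$ to the idempotent $\varphi(e)$. By Lemma~\ref{main} applied to the pair $(\varphi(S),\varphi(G))$, we conclude $\varphi(a)$ is $\mathcal{R}$-related to $\varphi(e)$ \emph{inside} $\varphi(S)$; pulling back through $\varphi$ gives that $a$ is $\mathcal{L}$-related to $e$ inside $S$, as required.

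The main obstacle is purely technical: setting up the anti-isomorphism so that it genuinely lands in a \emph{transformation} monoid with the normalizer hypothesis intact, rather than merely in some abstract semigroup. The cleanest route is probably to avoid transposing matrices on the same set and instead use the anti-isomorphism $S\to S^{\mathrm{op}}$ and embed $S^{\mathrm{op}}$ via its right regular representation into some $T_N$; one then checks that the image of $G$ under this composite is still a group acting by permutations and is still the full normalizer. Alternatively, if the authors prefer, the whole inductive argument of Lemma~\ref{main} can simply be repeated with left and right interchanged and ``first coordinate'' replaced by ``last coordinate'' throughout --- every step ($h^{-k}bh\in S$, $i_2^{-k}b\in S$, and the final concatenation) dualizes verbatim. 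Either way the proof is two or three lines once the duality is in place, which is why the statement is reasonably flagged as following ``by symmetry''.

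\begin{proof}
Consider the anti-isomorphism $\varphi\colon SG\to T$ onto a transformation monoid $T$ obtained by composing the natural anti-isomorphism $SG\to (SG)^{\mathrm{op}}$ with a faithful transformation representation of $(SG)^{\mathrm{op}}$. Since $\varphi$ reverses products and fixes inverses of invertible elements, $\varphi(S)$ is a transformation submonoid of $T$, $\varphi(G)$ is a group of permutations with $\varphi(G)^{-1}=\varphi(G)$, and the condition $gSg^{-1}=S$ for all $g$ translates to $\varphi(g)\varphi(S)\varphi(g)^{-1}=\varphi(S)$; conversely any element of $T$ normalizing $\varphi(S)$ pulls back to an element of $SG$ normalizing $S$, so $\varphi(G)$ is exactly the normalizer of $\varphi(S)$. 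Moreover $\varphi(SG)=\varphi(S)\varphi(G)$ and $\varphi$ sends idempotents to idempotents and interchanges the relations $\mathcal{L}$ and $\mathcal{R}$.

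Now suppose $a\in S$ is $\mathcal{L}$-related in $SG$ to an idempotent $e\in SG$. Then $\varphi(a)\in\varphi(S)$ is $\mathcal{R}$-related in $\varphi(S)\varphi(G)$ to the idempotent $\varphi(e)$. Applying Lemma~\ref{main} to the transformation monoid $\varphi(S)$ and its normalizer $\varphi(G)$, we conclude that $\varphi(a)$ is $\mathcal{R}$-related to $\varphi(e)$ within $\varphi(S)$. Transporting back along $\varphi$ shows that $a$ is $\mathcal{L}$-related to $e$ within $S$, which is the assertion.
\qed
\end{proof}
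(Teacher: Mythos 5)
Your fallback remark (rerun the proof of Lemma~\ref{main} with left and right interchanged) is exactly what the paper does: its proof is literally ``by symmetry''. Your primary route, transporting the statement along an anti-isomorphism into a transformation monoid, is a genuinely different packaging, but as written it has one false step. To invoke Lemma~\ref{main} for the pair $(\varphi(S),\varphi(G))$ you must verify its hypothesis that $\varphi(G)$ is \emph{the} normalizer of $\varphi(S)$ in the full symmetric group on the new point set (for the regular representation, a set of size $|SG|$). Your justification only considers elements of $T=\varphi(SG)$ that normalize $\varphi(S)$, which is not the relevant normalizer, and the claim itself is false in general: the normalizer of $\varphi(S)$ in $\Sym(SG)$ is typically far larger than $\varphi(G)$ (for instance, if $S$ is the trivial monoid it is the whole symmetric group on the new points). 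A smaller quibble: $\varphi$ reverses products, so $\varphi(SG)=\varphi(G)\varphi(S)$, and you need $\varphi(G)$ to normalize $\varphi(S)$ (equivalently $GS=SG$) to rewrite this as $\varphi(S)\varphi(G)$.

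The gap is easily repaired, in either of two ways. (i) Observe that the proof of Lemma~\ref{main} never uses that $G$ is the \emph{full} normalizer: every step only needs that $G$ is a finite permutation group normalizing $S$ (conjugation by elements of $G$ and multiplication keep you inside $S$), so the lemma applies to $(\varphi(S),\varphi(G))$ anyway. (ii) Keep Lemma~\ref{main} as a black box and replace $\varphi(G)$ by the full normalizer $\widehat{G}$ of $\varphi(S)$ in the symmetric group on the new points: since $\varphi(G)\le\widehat{G}$, the element $\varphi(a)$ is still ${\mathcal R}$-related to the idempotent $\varphi(e)$ in the oversemigroup $\varphi(S)\widehat{G}$ (${\mathcal R}$-relatedness is inherited by oversemigroups), and Lemma~\ref{main} then gives ${\mathcal R}$-relatedness inside $\varphi(S)$, which pulls back through $\varphi$ exactly as you describe. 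With either repair your transport argument is correct; it buys a formal derivation of the dual statement at the cost of setting up the representation, whereas the paper's ``by symmetry'' is shorter because the inductive computation dualizes verbatim.
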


Two elements $a,b \in S$ are said to be ${\mathcal H}$-related if they are ${\mathcal R}$-related and ${\mathcal L}$-related.
The two previous results imply the following. 

\begin{lemma}\label{hmain}
Let $S\le T_n$ be a transformation monoid and $G$ be the normalizer of $S$ in the symmetric group $G$. If $a\in S$ is ${\mathcal H}$-related in $SG$ to an idempotent of $SG$, then $a$ is ${\mathcal H}$-related in $S$ to the same idempotent. 
\end{lemma}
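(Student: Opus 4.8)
The final statement is Lemma~\ref{hmain}, which asserts that $\mathcal{H}$-relatedness to an idempotent carries from $SG$ down to $S$.

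\medskip

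The plan is to deduce this immediately from the two preceding lemmas (the $\mathcal{R}$-version, Lemma~\ref{main}, and its $\mathcal{L}$-dual), since $\mathcal{H} = \mathcal{R} \cap \mathcal{L}$ by definition. First I would fix $a \in S$ and an idempotent $e$ of $SG$, and assume $a$ is $\mathcal{H}$-related to $e$ in $SG$; by definition this means $a$ is both $\mathcal{R}$-related to $e$ in $SG$ and $\mathcal{L}$-related to $e$ in $SG$. Recall from the proof of Lemma~\ref{main} that, via McAlister's theorem, $S$ and $SG$ have the same idempotents, so $e \in S$ and the statement makes sense.

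\medskip

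The key step is then simply to apply Lemma~\ref{main} to conclude that $a$ is $\mathcal{R}$-related to $e$ \emph{in $S$}, and to apply its $\mathcal{L}$-dual to conclude that $a$ is $\mathcal{L}$-related to $e$ \emph{in $S$}. Combining these two facts and again invoking the definition of $\mathcal{H}$ as the intersection of $\mathcal{R}$ and $\mathcal{L}$, we obtain that $a$ is $\mathcal{H}$-related to $e$ in $S$, which is exactly what is required. No further estimates or constructions are needed: the substantive work has already been done in establishing the one-sided versions.

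\medskip

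I do not anticipate a genuine obstacle here, since the argument is purely formal once Lemmas~\ref{main} and the $\mathcal{L}$-dual are in hand. The only point that deserves a word of care is that $\mathcal{H}$-relatedness in $S$ is defined using witnesses in $S^1$ (or $S$, since $S$ is a monoid here), and one should check that the witnesses produced by the two one-sided lemmas indeed lie in $S$ rather than merely in $SG$; but this is precisely what those lemmas guarantee, so the conclusion follows.

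\begin{proof}
By definition, $a$ is $\mathcal{H}$-related to an idempotent $e$ in $SG$ if and only if $a$ is both $\mathcal{R}$-related to $e$ and $\mathcal{L}$-related to $e$ in $SG$. By Lemma~\ref{main}, $a$ is then $\mathcal{R}$-related to $e$ in $S$, and by its $\mathcal{L}$-dual, $a$ is $\mathcal{L}$-related to $e$ in $S$ (note that, as observed in the proof of Lemma~\ref{main}, $S$ and $SG$ have the same idempotents, so $e \in S$). Hence $a$ is $\mathcal{H}$-related to $e$ in $S$.
\qed
\end{proof}
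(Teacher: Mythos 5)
Your proof is correct and matches the paper exactly: the paper also obtains Lemma~\ref{hmain} by combining Lemma~\ref{main} with its $\mathcal{L}$-dual, using that $\mathcal{H}=\mathcal{R}\cap\mathcal{L}$. Your extra remark about the idempotent lying in $S$ is a harmless (and accurate) clarification already contained in the proof of Lemma~\ref{main}.
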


A number of consequences follow from these lemmas. 

\begin{cor}
Let $S\le T_n$ be a transformation monoid and $G$ be the normalizer of $S$ in the symmetric group $\sym$. 
Then  $SG$ is regular if and only if  $S$ is regular.
\end{cor}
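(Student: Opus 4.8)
The plan is to deduce the corollary directly from Lemma~\ref{main} (and its ${\mathcal L}$-dual), using the standard characterization of regularity recalled at the start of the subsection: a semigroup is regular if and only if every element is ${\mathcal R}$-related to an idempotent. Both implications should follow with little extra work once that characterization is in place.

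First I would prove the forward direction. Suppose $SG$ is regular. Let $a\in S$. Since $a\in SG$ and $SG$ is regular, $a$ is ${\mathcal R}$-related in $SG$ to some idempotent $e$ of $SG$. By Lemma~\ref{main}, $a$ is ${\mathcal R}$-related in $S$ to the same idempotent $e$ (in particular $e\in S$, which Lemma~\ref{main}'s proof establishes via McAlister's theorem that $S$ and $SG$ share their idempotents). Hence every element of $S$ is ${\mathcal R}$-related in $S$ to an idempotent of $S$, so $S$ is regular.

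For the converse, suppose $S$ is regular. Here I would use the hypothesis, already invoked in the section's motivating discussion, that every element of $SG$ is of the form $sg$ with $s\in S$, $g\in G$ (this is the preceding lemma $\langle S,G\rangle=SG$). Given $sg\in SG$ with $s\in S$, pick $s'\in S$ with $ss's=s$ and set $b=g^{-1}s'$; a short computation gives $(sg)(b)(sg)=s(g g^{-1})s'(g^{-1}g)s g=ss'sg=sg$, wait — more carefully, $(sg)(g^{-1}s')(sg)=s s' s g = sg$, so $sg$ is regular in $SG$ with pre-inverse $g^{-1}s'\in SG$. Thus $SG$ is regular. Alternatively, and more in the spirit of the lemmas just proved, one can argue: since $S$ is regular, $a=sg$ satisfies $a\,{\mathcal R}\,s$ in $SG$ (as $g$ is a unit), and $s$ is ${\mathcal R}$-related in $S$, hence in $SG$, to an idempotent; so $a$ is ${\mathcal R}$-related in $SG$ to an idempotent, giving regularity of $SG$. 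Either route is routine.

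I do not expect a serious obstacle here: the content is entirely carried by Lemma~\ref{main}, whose proof is the delicate part (as the authors themselves flag with the remark that the proof is ``short but tricky''). The only points requiring a little care are making sure the idempotent produced lands in $S$ — handled inside Lemma~\ref{main} via the shared-idempotents fact — and, for the converse, noting that units do not obstruct regularity, so that regularity of $S$ transfers painlessly upward to $SG=SG$. I would keep the written proof to a couple of lines invoking Lemma~\ref{main}, its ${\mathcal L}$-dual if one prefers the symmetric phrasing, and the regularity-via-${\mathcal R}$-class criterion.
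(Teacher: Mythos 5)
Your proposal is correct and follows essentially the same route as the paper: the forward direction by combining Lemma~\ref{main} with the ``${\mathcal R}$-related to an idempotent'' criterion, and the converse by the explicit computation $sg=(sg)(g^{-1}s')(sg)$, where $g^{-1}s'=(g^{-1}s'g)g^{-1}\in SG$ since $G$ normalizes $S$ (the one small point the paper spells out and you only assert). No gap; nothing further needed.
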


\begin{proof}
Let $a\in S$. As $SG$ is regular and $S\le SG$, it follows that $a$ is ${\mathcal R}$-related in $SG$ to an idempotent of $SG$. By Lemma \ref{main},  $a$ is ${\mathcal R}$-related in $S$ to the same idempotent (which is thus in $S$). We conclude that every element in $S$ is ${\mathcal R}$-related to an idempotent. 

Regarding the converse, suppose $S$ is regular, say $s=ss's$ and $s'=s'ss'$, for all $s\in S$. Let $sg\in SG$. Then $sg=(sg)(g^{-1}s')(sg)$ and $g^{-1}s'=(g^{-1}s')(sg)(g^{-1}s')$; in addition $g^{-1}s'=(g^{-1}s'g)g^{-1}\in SG$. It is proved that every element in $SG$ has an inverse in $SG$. The result follows.
\qed
\end{proof}

We observe that it is possible for non-idempotents $p,q\in S$ to be ${\mathcal R}$-related in $SG$, but not in $S$. For example, pick $g,t,q\in T_7$ as follows:  $g:=(567)$, 
\[\begin{array}{ccc}
t:=\left( \begin{array} { c c c c } { { 1,2,3,4 } } & { 5 } & { 6 } & { 7 } \\ { 1 } & { 2 } & { 3 } & { 4 } \end{array} \right)&\mbox{ and }&
q:=\left( \begin{array} { c c c c } { { 1,2,3,4 } } & { 5 } & { 6 } & { 7 } \\ { 1 } & { 3 } & { 4 } & { 2 } \end{array} \right)
\end{array}
\]

Then $S:=\langle g,t,q\rangle$ has $7$ elements and its normalizer $G$ in $S_7$ is generated by $\langle g, (34)(76),(23)(76)\rangle$. We have $t=q(243)$ and $q=t(234)$, thus $t$ and $q$ are ${\mathcal R}$-related in $SG$, but they are not ${\mathcal R}$-related in $S$.

Recall that a semigroup is completely regular if each element belongs to a maximal subgroup; equivalently, every ${\mathcal H}$-class contains an idempotent. Therefore, Lemma \ref{hmain} implies the following.

\begin{cor}
Let $S\le T_n$ be a transformation monoid and $G$ be the normalizer of $S$ in the symmetric group $\sym$. Then $SG$ is completely regular if and only if  $S$ is completely regular. 
\end{cor}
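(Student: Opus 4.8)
The plan is to mirror the proof of the regularity Corollary, using Lemma~\ref{hmain} in place of Lemma~\ref{main}, together with the fact that $S$ and $SG$ have exactly the same idempotents. That fact I would record first, exactly as in the proof of Lemma~\ref{main}: an idempotent $e$ of $SG$, written $e=sg$ with $s\in S,\ g\in G$, lies in $\langle s,G\rangle$ and hence is an idempotent of $\langle s,G\rangle\setminus G$ (or it is the identity, which belongs to the monoid $S$); by McAlister's lemma it is then an idempotent of $\langle g^{-1}sg:g\in G\rangle\setminus G\subseteq S$. (A quicker route: $e=e^{|g|}=\prod_{i=0}^{|g|-1}g^isg^{-i}\in S$, since $g^{|g|}=\mathrm{id}$.) Thus $E(SG)=E(S)$.

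For the implication ``$SG$ completely regular $\Rightarrow$ $S$ completely regular'': given $a\in S$, complete regularity of $SG$ makes $a$ belong to a maximal subgroup of $SG$, so $a$ is $\mathcal H$-related in $SG$ to the identity $e$ of that subgroup, an idempotent which lies in $S$ by the above. Lemma~\ref{hmain} then places $e$ in the $\mathcal H$-class of $a$ in $S$, so $a$ lies in the maximal subgroup of $S$ at $e$. Since $a$ was arbitrary, every $\mathcal H$-class of $S$ meets $E(S)$, that is, $S$ is completely regular.

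The converse is the delicate direction, and the direct-inverse trick that settled regularity does not carry over: for $sg\in SG$ the element $g^{-1}s'$ (with $s'$ the inverse of $s$ in its maximal subgroup of $S$) is an inverse of $sg$ in $SG$, but $g^{-1}s'$ and $sg$ need not commute, so it does not witness that $sg$ lies in a subgroup. Instead I would try to prove the geometric statement that every $sg\in SG$ permutes its image in $\Omega$; that suffices, since a self-map which permutes its image generates a cyclic group in $T_n$, so a subsemigroup of $T_n$ all of whose elements permute their images is a union of subgroups, hence completely regular. To attack ``$sg$ permutes its image'' I would use that $(sg)^{|G|}\in S$ (because $(sg)^{|g|}=\prod_{i=0}^{|g|-1}g^isg^{-i}\in S$ and $|g|\mid |G|$): as an element of the completely regular semigroup $S$ it permutes its own image, so the ranks of the powers $(sg)^k$ have already stabilised by step $|G|$, and the task is to push this stabilisation back to $k=1$, i.e. to show $\rank(sg)=\rank\bigl((sg)^2\bigr)$. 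This amounts to showing that $g s g^{-1}$ is injective on $\Omega s$, for which one would exploit that conjugation by elements of $G$ preserves the functional digraph of $s$ (cycle type of the restriction to the image, and the in-forest feeding into it) and that $S$ is closed under these conjugations. This rank-stabilisation step is precisely the main obstacle: it is where complete regularity of $S$ and the fact that $G$ is a group normalising $S$ have to be used in an essential way, and it is the point I would scrutinise most carefully before being confident in the ``$\Leftarrow$'' direction as stated.
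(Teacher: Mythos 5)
Your first direction is exactly the paper's argument: the paper's entire justification for this corollary is the sentence ``Therefore, Lemma~\ref{hmain} implies the following'', which covers precisely the implication you prove ($SG$ completely regular $\Rightarrow$ $S$ completely regular). Your preliminary observation that $E(SG)=E(S)$ (either via McAlister or via $e=e^{|g|}=\prod_{i}g^isg^{-i}\in S$) and the appeal to Lemma~\ref{hmain} are both correct.

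Your suspicion about the converse is well founded: the rank-stabilisation step you isolate --- pushing $\rank((sg)^k)=\rank((sg)^{k+1})$ back from $k=|g|$ to $k=1$ --- genuinely fails, and the ``if'' direction of the corollary is false as stated (the paper supplies no argument for it, since Lemma~\ref{hmain} only gives the other implication). Concretely, take $n=4$ and let $s,e,c\in T_4$ be the idempotents sending $(1,2,3,4)$ to $(1,2,1,2)$, $(1,1,3,3)$ and $(1,1,1,1)$ respectively. Then $S=\{\mathrm{id},s,e,c\}$ is a transformation monoid (one checks $se=es=c$ and that every product involving $c$ equals $c$), and it is a band, hence completely regular. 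The transposition $g=(2\,3)$ satisfies $g^{-1}sg=e$, $g^{-1}eg=s$ and $g^{-1}cg=c$, so $g$ lies in the normalizer $G$ of $S$ in $S_4$. But $sg$ sends $(1,2,3,4)$ to $(1,3,1,3)$, which has rank $2$, while $(sg)^2=c$ has rank $1$; hence $sg$ lies in no subgroup of $T_4$, a fortiori in no subgroup of $SG$, and $SG$ is not completely regular. (Note that $(sg)^2=s\cdot(gsg^{-1})\cdot g^2=se=c\in S$ does permute its image, exactly as your argument predicts; the problem is that the rank has already dropped at the very first step.) So the honest conclusion is the one-way implication you proved; the biconditional cannot be rescued by any proof, and your refusal to sign off on the ``$\Leftarrow$'' direction was the right call.
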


An element $a$ of a semigroup $S$ is said to be {\em abundant} if it is ${\mathcal R}$-related and ${\mathcal L}$-related to an idempotent in some oversemigroup $T$ of $S$.  Therefore, if an element $a\in S$ is abundant in $SG$, this means that $a$ is also abundant in $S$. The {\em abundant world} generalizes the regular world, but in this context adds nothing. 

An element $a$ of a semigroup $S$ is said to be {\em right [left] inverse} if it is ${\mathcal R}$-related [${\mathcal L}$-related] to exactly one idempotent in $S$; the element is inverse if it is both left and right inverse.  A semigroup is inverse if all of its elements are inverse. As seen above, if $a\in S$ is ${\mathcal R}$-related to exactly one idempotent in $SG$, by Lemma \ref{main} we know that in $S$ the element $a$ is ${\mathcal R}$-related to the same idempotent (that belongs to $S$); thus, if $a\in S$ is right inverse in $SG$, it is also right inverse in $S$;  as a consequence, if  $SG$ is an inverse semigroup, then so is $S$. (This last conclusion follows immediately from the fact  if $SG$ is inverse, it is regular and the idempotents commute and hence the same is true in $S$.)  

A semigroup is said to be \emph{Clifford} if it is inverse and completely regular. By the results above it follows that if $SG$ is Clifford, then so is $S$. 

A monoid $S$ is {\em intra-regular} if for all $a\in S$ there exist $b,c \in S$ such that $ba^2c= a$. 
We would like to know if $SG$ intra-regular implies $S$ intra-regular, but we only have the following partial result. 

\begin{prop}
Let $G\le \sym$ be a group of exponent~$2$ and let $S\le \trans$ be a transformation monoid. Then $SG$ is intra-regular implies that $S$ is intra-regular. 
\end{prop}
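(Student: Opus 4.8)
The plan is to imitate the structure of the proof of Lemma~\ref{main}, replacing the chain of one-sided translations used there by a two-sided ``squeeze'' argument that uses the hypothesis that $G$ has exponent~$2$. Let $a\in S$; since $SG$ is intra-regular there are $b_1i_1,b_2i_2\in SG$ with $(b_1i_1)a^2(b_2i_2)=a$, where $b_1,b_2\in S$ and $i_1,i_2\in G$. As in Lemma~\ref{main}, the point is to absorb the group elements $i_1,i_2$ into elements of $S$, using only that $S$ is closed under $s\mapsto g^{-1}sg$ and that $i_1^2=i_2^2=1$. Concretely, first rewrite $b_1i_1a^2b_2i_2=a$ by conjugating: $i_1a^2b_2i_2 = (i_1b_1^{-1}\cdot\text{stuff})\cdots$; more cleanly, multiply the identity on the left by $i_1^{-1}=i_1$ and on the right by $i_2^{-1}=i_2$ to obtain $(i_1b_1i_1)\,(i_1a^2b_2i_2)\,=\,i_1ai_2$ after suitable regrouping, and then iterate the conjugation trick of Lemma~\ref{main} (using $i_1^{-k}b\in S$ type statements) to show step by step that $i_1b_1\in S$ and $b_2i_2\in S$, hence that $i_1a^2\in S$ and $a^2i_2\in S$; because $i_1,i_2$ have order~$2$ the inductions terminate after one step rather than requiring $h^{-k}=h$.

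The key technical step I expect is the following: starting from $b_1i_1a^2b_2i_2=a$ and the fact that $i_1^{-1}b_1 i_1\in S$ (for $b_1\in S$, since $G$ normalizes $S$), I want to conclude $i_1 b_1\in S$. Conjugating $a$ by $i_1$ gives $i_1^{-1}ai_1\in S$, i.e.\ $i_1 a i_1\in S$; then $i_1 a i_1\cdot(\text{something in }S)$ should recover $i_1 b_1 (\cdots)$. The exponent-$2$ hypothesis is exactly what makes $i_1ai_1\in S$ the ``same'' twisted copy as the one appearing when I pre-multiply the intra-regularity identity by $i_1$, so the two descriptions can be matched. Doing the symmetric computation on the right with $i_2$ gives $b_2i_2\in S$. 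Once both $i_1b_1\in S$ and $b_2i_2\in S$ are established, set $u=i_1b_1\in S$ and $v=b_2i_2\in S$; then $u a^2 v = i_1 b_1 a^2 b_2 i_2 = i_1 a i_2 = i_1 a i_2$, which is a twisted copy of $a$ lying in $S$ (being a product of elements of $S$), and conjugating back by $i_1$ on the left and $i_2$ on the right — again using order~$2$ — one recovers $a = (i_1 u i_1)\,a'^2\,(i_2 v i_2)$ with all factors in $S$, where $a'$ is the appropriate twist of $a$ that is itself in $S$. This exhibits $a$ as intra-regular in $S$.

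The main obstacle, and the reason only the exponent-$2$ case is claimed, is bookkeeping of the twists: intra-regularity involves $a^2$, not $a$, so when one conjugates the witnessing identity by $i_1$ or $i_2$ the square interacts with the conjugation as $(i^{-1}ai)^2 = i^{-1}a^2 i$, which is harmless, but the two group elements $i_1$ and $i_2$ sit on opposite sides and a priori do not commute, so a general argument would need to push an element like $h^{-k}$ past $a^2$ and past the other group element, and the telescoping that worked for the one-sided $\mathcal{R}$-relation in Lemma~\ref{main} no longer closes up. When $G$ has exponent~$2$, every $i_j^{-1}=i_j$, so each induction collapses immediately and the two sides can be handled independently; for larger exponent one would need $i_1ai_2$ itself to be expressible in $S$, which need not hold. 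I therefore expect the write-up to consist of: (1) the observation $S$ and $SG$ have the same idempotents (from the McAlister lemma already invoked), which here is used only to keep the statement in the regular spirit; (2) the two conjugation-and-cancel computations establishing $i_1b_1\in S$ and $b_2i_2\in S$; and (3) the final substitution, with the exponent-$2$ hypothesis invoked at each cancellation.
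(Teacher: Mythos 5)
There is a genuine gap in your plan: the intermediate claims you aim for --- $i_1b_1\in S$, $b_2i_2\in S$, and hence $i_1a^2\in S$, $a^2i_2\in S$ --- do not follow from the hypotheses, and are in fact false for perfectly legitimate choices of witnesses. Take $S=\{1\}$ (the trivial transformation monoid) and $G=\langle(1\,2)\rangle$, a group of exponent $2$ normalizing $S$; then $SG=G$ is intra-regular, and for $a=1$ the identity $a=(b_1i_1)a^2(b_2i_2)$ holds with $b_1=b_2=1$, $i_1=i_2=(1\,2)$, yet $i_1b_1=(1\,2)\notin S$ and $a^2i_2=(1\,2)\notin S$. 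So no amount of conjugating and cancelling can show that the group parts of the \emph{given} witnesses are absorbed into $S$; they must be eliminated instead. In addition, your closing computation is not correct as algebra: from $b_1i_1a^2b_2i_2=a$ one cannot conclude $i_1b_1a^2b_2i_2=i_1ai_2$, because the factor $i_1$ sitting between $b_1$ and $a^2$ in the witness identity has silently disappeared; so even granting the unproved membership claims, the final substitution does not close up. (The appeal to the McAlister idempotent lemma is also not relevant here.)

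The paper's proof takes a different and much shorter route: instead of trying to place the given witnesses in $S$, it builds \emph{new} witnesses inside $S$ out of them. Writing $a=eg\,a^2\,fh$ with $e,f\in S$ and $g,h\in G$, normalization gives $g(ae)g^{-1}\in S$, and exponent $2$ turns this into $gaeg\in S$, whence $egaeg\in S$; similarly $hfh^{-1}=hfh\in S$ gives $fhfh\in S$. Then the telescoping identity
\[
(egaeg)\,a^2\,(fhfh)\;=\;ega\,(ega^2fh)\,fh\;=\;ega\cdot a\cdot fh\;=\;ega^2fh\;=\;a
\]
exhibits $a$ as intra-regular in $S$, with exponent $2$ used only to rewrite the conjugates $g(\cdot)g^{-1}$ and $h(\cdot)h^{-1}$ as the words actually needed. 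If you want to salvage your write-up, this is the substitution to aim for: do not try to prove $i_1b_1\in S$ or $b_2i_2\in S$, but rather check that $b_1i_1ab_1i_1$ and $b_2i_2b_2i_2$ lie in $S$ and serve as the intra-regularity witnesses.
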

\begin{proof}
As $S$ is a finite semigroup, for each $x\in S$ there exist natural numbers $l$ and $m$, $m>l$, such that $x^l=x^m$. 

Let $a \in S$ be arbitrary, and $l$ and $m$ as above. 
If $l=1$ and $m=2$, then  $a=a^2$ and hence $a=1aa1$, with $1\in S$;  if $l=1$ and $m>2$, then $a=1a^2a^{m-2}$, so the result holds for all $a$ for which $l=1$. 

Assume instead that $l,m>1$. By  intra-regularity in $SG$, there exist $eg,fh\in SG$ such that $a=ega^2fh$, with $e,f\in S$ and $g,h\in G$. It is clear that $f,hfh^{-1}\in S$ and hence $fhfh^{-1}\in S$; as $h^{-1}=h$ it follows that $fhfh\in S$. We claim that $egaeg\in S$. In fact $ae\in S$ because $a,e\in S$; thus $gaeg^{-1},e\in S$ and hence $egaeg^{-1}=egaeg\in S$.  The claim follows. Now
\[
(egaeg)a^2(fhfh)=ega(ega^2fh)fh=ega(a)fh=ega^2fh=a. 
\]It is proved that every intraregular element in $SG$ is intraregular in $S$, when $G$ satisfies $x^2=1$. \qed 
\end{proof}



\subsection{Semigroups having square roots}

The aim of this subsection is to carry the foregoing investigation to the case of $SG$ having square roots. Observe that every finite semigroup satisfies an identity of the form $x^m=x^k$, for $0\le k<m$. If $a,b\in S$ and $b^2=a$ we say that $b$ is a square root of $a$; we denote an arbitrary square root of $x$ by $\sqrt{x}$ and the notation $\sqrt{x}\in A\subseteq S$ means that   $x$ has a square root in the set $A$. 

\begin{theorem}\label{mainsquare}
Let $S$ be a finite semigroup in which every element has a square root. Then for every $x\in S$ we have $\sqrt{x}\in \langle x \rangle$. 
\end{theorem}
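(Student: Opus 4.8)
The plan is to work inside the finite monogenic semigroup $\langle x\rangle$, using its standard structure theory, and to reduce the existence of a square root \emph{somewhere in $S$} to the existence of one \emph{inside $\langle x\rangle$}. First I would recall that $\langle x\rangle$ is finite, so it has an \emph{index} $i$ and a \emph{period} $d$: the powers $x,x^2,\ldots,x^{i+d-1}$ are distinct, $x^i,x^{i+1},\ldots,x^{i+d-1}$ form the unique maximal subgroup (a cyclic group $C_d$) of $\langle x\rangle$, and $x^{i+d}=x^i$. The key structural fact I would exploit is that the map $y\mapsto y^2$ permutes $\langle x\rangle\setminus\{x,\ldots,x^{i-1}\}$? — more precisely, on the "tail" powers squaring moves you deeper into the semigroup, while on the cyclic kernel $C_d$ squaring is the doubling map, which is a bijection on $C_d$ iff $d$ is odd.

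The heart of the argument is the following. Suppose $b\in S$ satisfies $b^2=x$. Then $x$ lies in the maximal subgroup $C_d$ (since $b^2 = x$ forces... hmm, actually $x$ need not lie in $C_d$ in general). Let me restructure: I would first show that if every element of $S$ has a square root, then $x$ must lie in a subgroup of $S$ — but that is essentially the content of the \emph{next} theorem in the paper (every element has a square root iff every element lies in a subgroup of odd order), which is proved later, so I should not assume it. Instead, the cleaner route: given $b$ with $b^2=x$, consider $c:=b^{2N}$ for a large multiple $N$ of the period of $b$ chosen so that $b^N$ is idempotent; then $b^{2N}=x^N$ lies in $C_d$, and $b^N$ commutes with $x$ and with $b$. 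Replacing $b$ by a suitable power of $b$ times an element of $\langle x\rangle$, one can arrange a square root lying in the subsemigroup generated by $x$ and $b^N$; then show $b^N\in\langle x\rangle$ (it is the identity of $C_d$, namely $x^{i d'}$ where $i d'\ge i$ and $d\mid i d'$) so that square root already lies in $\langle x\rangle$. The technical point is handling the tail: one checks that no square root can "live outside $\langle x\rangle$" in a way that cannot be pulled back, because $x \cdot b = b \cdot b^2 = b^3$ and iterating shows $b$ is sandwiched by powers of $x$, forcing $b$ into the kernel and hence into $C_d\subseteq\langle x\rangle$ once we pass to the subgroup containing $x$.

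The cleanest version I would actually write: Let $b^2 = x$. Pick $m$ with $b^m$ idempotent, $e := b^m$; then $ex = xe = x$ (since $e$ is a power of $b$ and fixes $x = b^2$: $eb^2 = b^{m+2}=b^2 e$, and one checks $eb^2=b^2$ using idempotency when $m\mid$ everything relevant — here take $m$ large so $b^m = b^{2m}$). So $x$ lies in the local monoid $eSe$, indeed $x\in H_e$, the group $\mathcal H$-class of $e$ (because $x = b^2$ with $b\in eSe$ and $x$ has the two-sided identity $e$; more carefully, $x$ has a square root $b$ which lies in $eSe$, and in the finite monoid $eSe$ with identity $e$, ... ). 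Now $H_e$ is a finite group containing $x$, so $\langle x\rangle$ contains the cyclic subgroup generated by $x$ in $H_e$; the square root of $x$ \emph{within that cyclic group} exists iff the order of $x$ in $H_e$ is odd. So the real claim becomes: \emph{if $x$ has any square root in $S$, then $x$ has odd order in its $\mathcal H$-class $H_e$}, whence $\sqrt{x}\in\langle x\rangle\cap H_e\subseteq\langle x\rangle$.

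\textbf{Main obstacle.} I expect the crux to be proving that a square root of $x$ anywhere in $S$ forces $x$ to have \emph{odd} order in the group $H_e$ — equivalently, ruling out that $x$ is a non-square in $H_e$ while still having a square root somewhere in $S$ outside $H_e$. The point is that any $b\in S$ with $b^2=x$ automatically satisfies $b\in H_e$ once we know $e = b^m$ is an identity for $x$: indeed $b = eb = be$ (for suitable $m$), $b^2 = x \in H_e$, so $b$ lies in the maximal subgroup $H_e$ as well (in a finite semigroup, if $b\in eSe$ and some power of $b$ equals the idempotent $e$, then $b\in H_e$). Then inside the finite group $H_e$, "$x$ has a square root" literally means $x$ is a square, i.e. $x\in H_e^2$; since $H_e$ is generated (as a group) by... no — cyclic subgroup $\langle x\rangle_{\text{grp}}$ need not contain a square root of $x$ even if $H_e$ does. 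So the final subtlety is: $x$ being a square in $H_e$ does \emph{not} immediately give a square root in $\langle x\rangle$. Resolving this is where I would invoke that \emph{every} element of $S$ — in particular every element of $H_e$ — has a square root, repeatedly: iterate to get $x = b_0^2$, $b_0 = b_1^2$, \ldots, all in $H_e$, producing $x = b_k^{2^{k+1}}$; choosing $k$ so that $2^{k+1}\equiv 1 \pmod{|\langle x\rangle_{\text{grp}}|}$ is impossible if that order is even, forcing the order of $x$ in $H_e$ to be odd, and then $x^{(\text{ord}(x)+1)/2}\in\langle x\rangle$ is the desired square root. Wrapping up this iterated-square-root / $2$-divisibility argument cleanly, and handling the bookkeeping of indices and periods so that everything genuinely lands in $\langle x\rangle$ rather than merely in $H_e$, is the part that will require the most care.
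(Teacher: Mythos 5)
There is a genuine gap, and it sits exactly where you flag the delicacy. Starting from an arbitrary $b\in S$ with $b^2=x$ and setting $e:=b^m$ idempotent, your claims $ex=xe=x$ and $b=eb=be$ are not valid in a general finite semigroup: $eb^2=b^{m+2}$ equals $b^2$ only when the index of $b$ is at most $2$, and $eb=b^{m+1}$ equals $b$ only when the index of $b$ is $1$, i.e.\ only when $b$ already lies in a subgroup --- which is essentially the fact the whole argument is supposed to deliver. (For instance, if $b$ maps $1\mapsto2\mapsto3\mapsto4\mapsto4$, then $e=b^3$ and $eb^2=b^3\neq b^2$.) The same problem recurs in the iteration at the end: a square root of an element of the group $\mathcal{H}$-class $H_e$ need not lie in $H_e$ (nor in any subgroup), so ``$x=b_0^2$, $b_0=b_1^2,\dots$, all in $H_e$'' is unjustified as stated. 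Each of these steps tacitly assumes that the elements involved have index $1$, which is true under the hypothesis but is nowhere proved in your sketch; so the reduction to ``$x$ has odd order in $H_e$'' does not yet stand.

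Your closing iterated-square-root idea can, however, be completed without the local-monoid detour, and then it becomes a correct proof genuinely different from the paper's: iterating square roots in $S$ gives $x=b_k^{2^{k+1}}$ for every $k$; choosing $2^{k+1}>|S|$ forces $x$ into the cyclic kernel group of $\langle b_k\rangle$, whose order $d_k\le|S|$ is exceeded by $2^{k+1}$, so the order $r=d_k/\gcd(d_k,2^{k+1})$ of $x$ there is odd, and $x^{(r+1)/2}\in\langle x\rangle$ squares to $x$. By contrast, the paper's proof needs none of this structure theory: the squaring map $s\colon S\to S$ is surjective by hypothesis, hence injective by finiteness; it maps $\langle x\rangle$ into $\langle x\rangle$, so its restriction to $\langle x\rangle$ is injective, hence surjective by finiteness, and $\sqrt{x}\in\langle x\rangle$ follows at once. (The same one-line bijectivity argument, applied to $\langle b\rangle$, is also the quickest way to close the index-$1$ gap in your version.)
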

\begin{proof}
Let $s: S\to S$  given by $s(x)=x^2$. That $S$ has square roots for all of its elements means that $s$ is surjective, and as $S$ is finite, it is also injective. But for any $x \in S$,
$s(\langle x \rangle) \subseteq  \langle x \rangle$. Let $t$ be the restriction of $s$ to $ \langle x \rangle$. Then $t$ is injective (because $s$ is), and by finiteness, also surjective. 
Hence $x$ has a square root in $ \langle x \rangle$.\qed
\end{proof}

Now back to the leitmotiv of this paper. 

\begin{cor}
Let $S\le T_n$ be a transformation monoid and $G$ be the normalizer of $S$ in the symmetric group $G$. If every element in  $SG$ has a square root, then so has every element in  $S$.  
\end{cor}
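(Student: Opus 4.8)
The plan is to reduce the statement to Theorem~\ref{mainsquare} by showing that if $a\in S$ has a square root in $SG$, then it already has one in $S$. Concretely, suppose $bh\in SG$ (with $b\in S$, $h\in G$) satisfies $(bh)^2=a\in S$; I want to produce a square root of $a$ lying inside $S$.

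First I would exploit the fact, established in the proof of Lemma~\ref{main}, that $S$ and $SG$ have the same idempotents (via McAlister's lemma), and more usefully the recurring trick used throughout Section~\ref{four}: if $s,\,g^{-1}sg\in S$ and $g\in G$ is finite of order dividing some power, then iterated conjugation-and-multiplication forces conjugates like $g^{k}sg^{-k}$ back into $S$. The key observation is that $a=bhbh\in S$ and also $hbhb = h(bhbh)h^{-1}\cdot$(adjustment) is a conjugate of $a$, and $bhbh=a$ means $bh$ commutes suitably with $a$; from $(bh)a = (bh)^3 = a(bh)$ one sees that $bh$ normalizes $\langle a\rangle$ in a weak sense. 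The aim is to show that $a$ itself, viewed inside its monogenic subsemigroup $\langle a\rangle$, has a square root in $\langle a\rangle\subseteq S$, but this is exactly what Theorem~\ref{mainsquare} gives — provided we first know that \emph{every} element of $S$ has a square root in $S$, which is what we are trying to prove, so that route is circular and must be avoided.

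The cleaner approach is therefore: take an arbitrary $a\in S$. Since $SG$ has square roots for all its elements, the squaring map $s\colon SG\to SG$ is a bijection (injective by finiteness, as in the proof of Theorem~\ref{mainsquare}). Restricting the bijection $s$ to the subsemigroup $\langle a\rangle$ of $S$: since $s(\langle a\rangle)\subseteq\langle a\rangle$ and $s$ is injective, the restriction is a bijection of $\langle a\rangle$ onto itself, exactly as in Theorem~\ref{mainsquare}'s argument. Hence $a$ has a square root inside $\langle a\rangle$, and $\langle a\rangle\subseteq S$, so this square root lies in $S$. Thus every element of $S$ has a square root in $S$.

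The main point to get right — and the only place where $SG$ (as opposed to an arbitrary oversemigroup) is needed — is the injectivity of the squaring map on $SG$, which needs $SG$ finite; that is immediate since $SG\le T_n$. Then the whole argument is really just Theorem~\ref{mainsquare} applied with the ambient semigroup $SG$ and the observation that for $a\in S$ the monogenic subsemigroup $\langle a\rangle$ is contained in $S$. So there is essentially no obstacle here: the corollary follows by combining the finiteness of $SG$ with the monogenic-restriction trick of Theorem~\ref{mainsquare}. If one wanted a self-contained statement, the one line to verify is that $\langle a\rangle$ computed in $SG$ equals $\langle a\rangle$ computed in $S$, which is clear since both are $\{a,a^2,a^3,\dots\}$.
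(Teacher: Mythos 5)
Your final argument is correct and is exactly the paper's proof: apply Theorem~\ref{mainsquare} (or its bijective-squaring-map argument) to the finite semigroup $SG$, and note that for $a\in S$ the square root produced lies in $\langle a\rangle\subseteq S$. The preliminary detour via McAlister's lemma and conjugation is unnecessary, as you yourself note, and can simply be deleted.
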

\begin{proof}
As $SG$ has square roots for all its elements, it follows that for every $x\in SG$ we have $\sqrt{x}\in \langle x\rangle$ and hence every $x\in S$ has square root in $S$.  
\qed\end{proof}

It is well known that every element of a finite group has a square root if and only if the group  has odd order. The next theorem extends this result to semigroups. 

\begin{cor}
Every element of a  finite semigroup $S$ has a square root if and only if every $s\in S$ belongs to an odd order maximal subgroup of $S$. 
\end{cor}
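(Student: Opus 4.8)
The plan is to derive this from Theorem~\ref{mainsquare} together with the classical fact that a finite group has a square root for every element if and only if it has odd order. For the ``if'' direction I would argue directly: given $s\in S$, pick a maximal subgroup $H$ of $S$ of odd order with $s\in H$; since $\gcd(2,|H|)=1$ the squaring map $h\mapsto h^{2}$ is injective on the group $H$, hence (as $H$ is finite) bijective, so $s=t^{2}$ for some $t\in H\subseteq S$, and $s$ has a square root in $S$.

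For the ``only if'' direction, assume every element of $S$ has a square root; I would first show that every $x\in S$ lies in a subgroup of $S$. By Theorem~\ref{mainsquare}, $x$ has a square root of the form $x^{j}$ with $j\ge 1$, so $x=x^{2j}$ with $2j>1$. Multiplying this identity on the right by $x^{a-1}$ gives $x^{a}=x^{a+(2j-1)}$ for every $a\ge 1$, so the sequence of powers $x,x^{2},x^{3},\dots$ is periodic from the start (not merely eventually periodic). Hence, writing $p$ for the least positive integer with $x^{p+1}=x$, the set $\langle x\rangle=\{x,x^{2},\dots,x^{p}\}$ is a finite cyclic group with identity $e=x^{p}$; in particular $e$ is an idempotent of $S$, every element of $\langle x\rangle$ is ${\mathcal H}$-related in $S$ to $e$, and therefore $x$ lies in the maximal subgroup $H:=H_{e}$ of $S$.

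It then remains to check that $|H|$ is odd. Now $H$ is a finite group and a subsemigroup of $S$ (being the ${\mathcal H}$-class of the idempotent $e$). For each $h\in H$, Theorem~\ref{mainsquare} yields a square root of $h$ inside $\langle h\rangle$, and $\langle h\rangle\subseteq H$ since $H$ is closed under products; hence squaring maps $H$ onto $H$, and, being a self-map of a finite set, it is a bijection. If $|H|$ were even, Cauchy's theorem would give $t\in H$ with $t^{2}=e=e^{2}$ and $t\ne e$, contradicting the injectivity of squaring on $H$. So $|H|$ is odd and $x$ belongs to the odd-order maximal subgroup $H$, which completes the argument.

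The step I expect to carry the real content is showing that $x$ lies in a subgroup at all: knowing only that $x$ has \emph{some} square root in $S$ does not by itself force $x$ to be completely regular, and it is precisely the sharper conclusion of Theorem~\ref{mainsquare}, that the square root may be taken inside $\langle x\rangle$, that upgrades ``$x$ has a square root'' to ``$\langle x\rangle$ is a group''. Everything after that is routine: the behaviour of the ${\mathcal H}$-class $H_{e}$ under multiplication (Green's theorem) and the classical odd-order criterion for finite groups.
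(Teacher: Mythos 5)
Your proposal is correct and follows essentially the same route as the paper: Theorem~\ref{mainsquare} gives $x=x^{2j}$, so $\langle x\rangle$ is a cyclic group, hence the ${\mathcal H}$-class of its identity is a maximal subgroup containing $x$, and closure of square roots inside that subgroup forces odd order, with the converse being the classical odd-order group fact. The only difference is that you spell out the Cauchy-theorem/injectivity details that the paper leaves implicit.
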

\begin{proof}
Let every element of $S$ have a square root, and $x\in S$. By Theorem \ref{mainsquare}, $\sqrt{x}\in \langle x\rangle$, that is, $\sqrt{x}=x^n$, for some natural $n$. Therefore, $x=x^{2n}$ and hence $\langle x\rangle$ is a cyclic group with $x^{-1}=x^{2n-2}$ and  $x^{2n-1}$ being the identity element. This means that the ${\mathcal H}$-class of $x$ is a group (since it has an idempotent); as every element in this group has a square root, which necessarily lies in it, this group must have odd order. 

Conversely, if $S$ is a union of odd order groups, then  every element in $S$ has a square root.   \qed
\end{proof}

It is well known that neither the symmetric group nor the full transformation monoid contain square roots of all their elements, except in the trivial cases. With our main theorem at hand we can say a bit more: if $S$ is a finite semigroup and $a\in S$ has no square root, then it is not possible to extend $S$ to a finite oversemigroup $T$ containing a square root for $a$ (because a square root of $a$ must belong to $\langle a\rangle$ and this semigroup remains the same in any oversemigroup of $S$).

\subsection{A negative result}

A semigroup $S$ is said to be {\em ${\mathcal R}$-commutative} if for all  $a,b\in S$ we have $ab{\mathcal R}ba$. Let $S<T(\{1,\ldots,7\})$ be the semigroup generated by the permutations $(24)(36)$, $(15)(23)(46)$ and the transformation 
\[
t =\left(
\begin{array}{ccc}
\{1,5,7\}&\{2,3\}&\{4,6\}\\
7&1&5
\end{array}
\right).
\]
The normalizer $G$ of $S$ in the symmetric group is the group generated by $(15)$, $(24)(36)$ and $(15)(23)(46)$. GAP shows that the semigroup $SG$ is ${\mathcal R}$-commutative, but $S$ is not. By symmetry, a corresponding counterexample exists for ${\mathcal L}$-commutativity.

\section{Problems}\label{five}
We now propose a number of problems. The first  is essentially in \cite{ac} but (annoyingly) keeps resisting. 
\begin{problem}
Complete  the classification of the groups possessing the  $3$- and $4$-ut property so that Theorem \ref{rank4} and Theorem \ref{rank3} can be completed.  
\end{problem}


%


There is a well known correspondence between the behaviour of $T(\Omega)$ and $\End(V)$, when $\Omega$ is a finite set and $V$ is a finite dimension vector space. This prompted the introduction of {\em independence algebras}, a class containing both sets and vector spaces as particular cases. Therefore the next two problems turn out to be very natural. 

\begin{problem}
Prove linear analogues of the main theorems in this paper. 
\end{problem}

\begin{problem}
Find in the context of independence algebras analogues of the main theorems in this paper. 
\end{problem}


Let $S$ be a finite semigroup. Two elements $a,b\in S$ are said to be $\mathcal{J}$-related if they generate the same principal ideal, that is, $S^1aS^1=S^1bS^1$.  We recall (\cite[p.57]{Ho95}) that if $S<T$ is a regular subsemigroup of a semigroup $T$, then ${\mathcal R}_S={\mathcal R}_T\cap (S\times S)$. The same happens for ${\mathcal L}$ or ${\mathcal H}$, but fails for ${\mathcal J}$. 

\begin{problem}
Let $S\le T_n$ and $G\le \sym$ be its normalizer in $\sym$. Is it true that  ${\mathcal J}_S={\mathcal J}_{SG}\cap (S\times S)$?
\end{problem}
     
     An {\em existential property} of semigroups is a first order language condition on the elements of the semigroup that uses an existential quantifier. For example, regularity is an existential property of semigroups. 
 \begin{problem}
 Let $S\le \trans$ be a semigroup and $G$ its normalizer in $\sym$. Let $P$ be an existential property of semigroups. Decide if $SG$ satisfies $P$ implies that $S$ also satisfies $P$. 
\end{problem}

The following result was proved for groups satisfying $g^2=1$. Can it be generalized for other classes of groups?

\begin{problem}
Let $G\le \sym$ be a group and let $S\le \trans$ be a transformation monoid. Is it true that if  $SG$ is intra-regular then  $S$ is intra-regular? 
\end{problem}

We close the list of problems with a general semigroup structure question.

\begin{problem}
Consider $G$, one of the groups appearing in Theorems \ref{rank2}--\ref{rank3}. Describe the structure (Green's relations, automorphisms, congruences, conjugacy classes, the variety generated, etc.) of the semigroups $\langle G,a\rangle$, for $a\in \trans$.  
\end{problem}

\end{document}